\newcolumntype{d}[1]{D{.}{.}{#1}}
\newcommand{\dd}{{d}}
\newcommand{\pr}{\mathbb{P}}
\newcommand{\esp}{\mathbb{E}}
\newtheorem{theorem}{Theorem}
\newtheorem{lem}[theorem]{Lemma}
\newtheorem{corollary}[theorem]{Corollary}
\begin{document}
\begin{frontmatter}

\title{Monotone spectral density estimation}
\runtitle{Monotone spectral density estimation}

\begin{aug}
\author[A]{\fnms{Dragi} \snm{Anevski}\corref{}\ead[label=e1]{dragi@maths.lth.se}} and
\author[B]{\fnms{Philippe} \snm{Soulier}\ead[label=e2]{philippe.soulier@u-paris10.fr}}
\runauthor{D. Anevski and P. Soulier}
\affiliation{Lund University and Universit\'e Paris Ouest Nanterre}
\address[A]{Centre for Mathematical Sciences\\
Box 118\\
SE-22100 Lund\\
Sweden\\
\printead{e1}} 
\address[B]{Department of Mathematics\\
University Paris X\\
B\^{a}timent G, Bureau E18\\
200 avenue de la R\'{e}publique\\
92000 Nanterre Cedex\\
France\\
\printead{e2}}
\end{aug}

\received{\smonth{1} \syear{2009}}
\revised{\smonth{1} \syear{2010}}

%
\begin{abstract}
We propose two estimators of a monotone spectral
density, that are based on the periodogram. These are the isotonic
regression of the periodogram and the isotonic regression of the
log-periodogram. We derive pointwise limit distribution results for
the proposed estimators for short memory linear processes and long
memory Gaussian processes and also that the estimators are rate
optimal.
\end{abstract}

%
\begin{keyword}[class=AMS]
\kwd{62E20}
\kwd{62G05}
\kwd{62M15}.
\end{keyword}
\begin{keyword}
\kwd{Limit distributions}
\kwd{spectral density estimation}
\kwd{monotone}
\kwd{long-range dependence}
\kwd{Gaussian process}
\kwd{linear process}.
\end{keyword}

\end{frontmatter}

\section{Introduction}
The motivation for doing spectral analysis of stationary time series
comes from the need to analyze the frequency content in the signal.
The frequency content can for instance be described by the spectral
density, defined below, for the process. One could be interested in
looking for a few dominant frequencies or frequency regions, which
correspond to multimodality in the spectral density. Inference methods
for multimodal spectral densities have been treated in
\citet{davieskovac2004}, using the taut string method. A simpler
problem is that of fitting a unimodal spectral density, that is, the
situation when there is only one dominant frequency, which can be
known or unknown, corresponding to known or unknown mode,
respectively, and leading to the problem of fitting a unimodal
spectral density to the data. In this paper we treat unimodal spectral
density estimation for known mode. A spectral density that is
decreasing on $[0,\pi]$ is a model for the frequency content in the
signal being ordered. A unimodal spectral density is a model for there
being one major frequency component, with a decreasing amount of other
frequency components seen as a function of the distance to the major
frequency.

Imposing monotonicity (or unimodality) means that one imposes a
nonparametric approach, since the set of monotone (or unimodal)
spectral densities is infinite dimensional. A parametric problem that
is contained in our estimation problem is that of a power law
spectrum, that is, when one assumes that the spectral density decreases as
a power function $f(u)\sim u^{-\beta}$ for $u\in(0,\pi)$, with unknown
exponent~$\beta$. Power law spectra seem to have important
applications to physics, astronomy and medicine; four different
applications mentioned in \citet{mccoywaldenpercival1998} are: (a)
fluctuations in the Earth's rate of rotation [cf.
\citet{munkmacdonald1975}], (b) voltage fluctuations across cell
membrane [cf. \citet{holden1976}], (c) time series of impedances
of rock
layers in boreholes [cf., e.g., \citet{kernerharris1994}] and
(d) x-ray
time variability of galaxies [cf. \citet{mchardyczerny1987}]. We
propose to use a nonparametric approach as an alternative to the power
law spectrum methods used in these applications. There are (at least)
two reasons why this could make sense: first, the reason for using a
power function, for example, to model the spectrum in the background radiation,
is (at best) a theoretical consideration exploiting physical theory
and leading to the power function as a good approximation. However,
this is a stronger model assumption to impose on the data than merely
imposing monotonicity, and thus one could imagine a wider range of
situations that should be possible to analyze using our methods.
Second, fitting a power law spectral model to data consists of doing
linear regression of the log periodogram; if the data are not very
well aligned along a straight line (after a log-transformation) this
could influence the overall fit. A nonparametric approach, in which
one assumes only monotonicity, is more robust against possible misfit.

Sometimes one assumes a piecewise power law spectrum [cf.
\citet{percival1991}] as a model. Our methods are well adapted to
these situations when the overall function behavior is that of a
decreasing function.

Furthermore there seem to be instances in the literature when a
monotonically decreasing (or monotonically increasing) spectral
density is both implicitly assumed as a model, and furthermore seems
feasible: two examples in \citet{percivalwalden1993} [cf., e.g.,
Figures 20 and 21 in \citet{percivalwalden1993}] are (e) the wind
speed in a certain direction at a certain location measured every
0.025 second (for which a decreasing spectral density seems to be
feasible) and (f) the daily record of how well an atomic clock keeps
time on a day-to-day basis (which seems to exhibit an increasing
spectral density). The methods utilized in \citet{percivalwalden1993}
are smoothing of the periodogram. We propose to use an
order-restricted estimator of the spectral density, and would like to
claim that this is better adapted to the situations at hand.

Decreasing spectral densities can arise when one observes a sum of
several parametric time series, for instance, AR(1) processes with
coefficient $|a|<1$; the interest of the nonparametric method in that
case is that one does not have to know how many AR(1) are summed up.
Another parametric example is an $\operatorname{ARFIMA}(0,d,0)$ with
$0<d<1/2$, which
has a decreasing spectral density, which is observed with added white
noise, or even with added one (or several) AR(1) processes; the
resulting time series will have a decreasing spectral density. Our
methods are well adapted to this situation, and we will illustrate the
nonparametric methods on simulated data from such parametric models.

The spectral measure of a weakly stationary process is the positive
measure $\sigma$ on $[-\pi,\pi]$ characterized by the relation
\[
\operatorname{cov}(X_0,X_k) = \int_{-\pi}^\pi e^{ i k x}
\sigma(\dd x) .
\]
The spectral density, when it exists, is the density of $\sigma$ with
respect to Lebesgue's measure. It is an even nonnegative integrable
function on $[-\pi,\pi]$. Define the spectral distribution function on
$[-\pi,\pi]$ by
\begin{eqnarray*}
F(\lambda)&=&\int_{0}^{\lambda} f(u) \,\dd u ,\qquad 0\leq\lambda\leq
\pi,\\
F(\lambda)&=&-F(-\lambda) ,\qquad -\pi\leq\lambda< 0 .
\end{eqnarray*}
An estimate of the spectral density is given by the periodogram
\[
I_n(\lambda) = \frac{1}{2\pi n} \Biggl| \sum_{k=1}^{n} X_k
{e}^{-{i} k \lambda} \Biggr|^2 .
\]
The spectral distribution function is estimated by the empirical
spectral distribution function
\[
F_n(\lambda) = \int_0^{\lambda} I_n(u) \,{d} u .
\]
Functional central limit theorems for $F_n$ have been established in
\citet{dahlhaus1989} and \citet{mikoschnorvaisa1997}.
However, since
the derivative is not a smooth map, the properties of $F_n$ do not
transfer to $I_n$, and furthermore it is well known that the
periodogram is not even a consistent estimate of the spectral density.
The standard remedy for obtaining consistency is to use kernel
smoothers. This, however, entails a bandwidth choice, which is somewhat
ad hoc. The assumption of monotonicity allows for the construction of
adaptive estimators that do not need a pre-specified bandwidth.

We will restrict our attention to the class of nonincreasing functions.
\begin{definition}
Let $\mathcal{F}$ be the convex cone of integrable, monotone
nonincreasing functions on $(0,\pi]$.
\end{definition}

Given a stationary sequence $\{X_k\}$ with spectral density $f$, the
goal is to estimate $f$ under the assumption that it lies in $\mathcal
{ F}$. We suggest two estimators, which are the ${\mathbb L}^2$
orthogonal projections on the convex cone $\mathcal{F}$ of the
periodogram and of the log-periodogram, respectively.
\begin{longlist}
\item The $\mathbb L^2$ minimum distance estimate between the
periodogram and $\mathcal{F}$ is defined as
%
\begin{equation} \label{eq:def-hatfn}
\hat{f}_n = \mathop{\arg\min}_{z\in\mathcal{F}} Q(z) ,
\end{equation}
with
\[
Q(z) = \int_0^{\pi}\bigl(I_n(s)-z(s)\bigr)^2 \,\dd s .
\]
This estimator of the spectral density naturally yields a
corresponding estimator $\hat F_n$ of the spectral distribution
function $F$, defined by
%
\begin{equation} \label{eq:def-hatFn}
\hat F_n(t) = \int_0^t \hat f_n(s) \,\dd s .
\end{equation}
\item The $\mathbb L^2$ minimum distance estimate between the
log-periodogram (often called the cepstrum) and the ``logarithm of
$\mathcal{F}$'' is defined as
%
\begin{equation} \label{eq:def-tildefn}
\tilde{f}_n = \exp\mathop{\arg\min}_{z\in\mathcal{F}}\tilde{Q}(z) ,
\end{equation}
with
\[
\tilde{Q}(z) = \int_0^{\pi} \{\log I_n(s)+\gamma-\log z(s)\}^2
\,\dd s ,
\]
where $\gamma$ is Euler's constant. To understand the occurrence of
the centering $-\gamma$, recall that if $\{X_n\}$ is a Gaussian white
noise sequence with variance $\sigma^2$, then its spectral density is
$\sigma^2/(2\pi)$ and the distribution of $I_n(s)/(\sigma^2/2\pi)$ is
a standard exponential (i.e., one half of a chi-square with two degrees
of freedom), and it is well known that if $Z$ is a standard
exponential, then $\esp[\log(Z)]=-\gamma$ and $\operatorname{var}(\log Z) =
\pi^2/6$ [see, e.g., \citet{hurvichdeobrodsky1998}]. The log-spectral
density is of particular interest in the context of long-range
dependent time series, that is, when the spectral density has a
singularity at some frequency and might not be square integrable,
though it is always integrable by definition. For instance, the
spectral density of an $\operatorname{ARFIMA}(0,d,0)$ process is
$f(x)=\sigma^2|1-e^{i x}|^{-2d}$, with
$d\in(-1/2,1/2)$. It is decreasing on $(0,\pi]$ for $d\in(0,1/2)$ and
not square integrable for $d\in(1/4,1/2)$. In this context, for
technical reasons, we will take $I_n$ to be a step function changing
value at the so-called Fourier frequencies $\lambda_k = 2\pi k/n$.
\end{longlist}

The paper is organized as follows: in Section \ref{sec:derivation} we
derive the algorithms for the estimators $\hat{f}_n$, $\hat F_n$ and
$\tilde{f}_n$. In Section \ref{sec:lowerbound} we derive a lower bound
for the asymptotic local minimax risk in monotone spectral density
estimation and show that the rate is not faster than $n^{-1/3}$. In
Section \ref{sec:limit-distr} we derive the pointwise limit
distributions for the proposed estimators. The limit distribution of
$\hat{f}_n$ (suitably centered and normalized) is derived for a linear
process. The asymptotic distribution is that of the slope of the
least concave majorant at 0 of a quadratic function plus a two-sided
Brownian motion. Up to constants, this distribution is the so-called
Chernoff's distribution [see \citet{groeneboomwellner2001}] which
turns up in many situations in monotone function estimation [see, e.g.,
\citet{prakasarao1969} for monotone density estimation and
\citet{wright1981} for monotone regression function estimation]. The
limit distribution for $\tilde{f}_n$ is derived for a Gaussian
process and is similar to the result for~$\hat{f}_n$.
Section \ref{sec:simulations} contains a simulation study with
plots of the estimators. Section~\ref{section:proofs} contains
the proofs of the limit distribution results (Theorems
\ref{theo:lineaire-monotone} and \ref{theo:gaussien-lrd-monotone}).

\section{Identification of the estimators}
\label{sec:derivation}

Let $h$ be a function defined on a compact interval $[a,b]$. The least
concave majorant $T(h)$ of $h$ and its derivative $T(h)'$ are defined
by
\begin{eqnarray*}
T(h) &=& \arg\min\{z\dvtx z \geq x, z \mbox{ concave} \} , \\
T(h)'(t) &=& \min_{u < t} \max_{v\geq t} \frac{h(v) - h(u)}{v-u} .
\end{eqnarray*}
By definition, $T(h)(t) \geq h(t)$ for all $t\in[a,b]$, and it is also
clear that $T(h)(a) = h(a)$, $T(h)(b) = h(b)$. Since $T(h)$ is
concave, it is everywhere left and right differentiable, $T(h)'$ as
defined above coincides with the left derivative of $T(h)$ and
$T(h)(t) = \int_a^t T(h)'(s) \,\dd s$ [see, e.g.,
\citet{hormander2007}, Theorem 1.1.9]. We will also need the
following result.
\begin{lem} \label{lem:charac-concmaj}
If $h$ is continuous, then the support of the Stieltjes measure $\dd
T(h)'$ is included in the set $\{T(h)=h\}$.
\end{lem}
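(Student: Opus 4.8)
The plan is to prove the contrapositive at the level of points: if $t_0$ lies in the open set $U=\{T(h)>h\}$, then $T(h)$ is affine on a whole neighbourhood of $t_0$, so that $T(h)'$ is locally constant and $t_0$ cannot belong to the support of $\d T(h)'$. Since $\{T(h)=h\}$ is exactly the complement of $U$, this yields the claimed inclusion. First I would record the facts already available from the discussion preceding the lemma: $T(h)$ is concave and continuous, $T(h)'$ is its non-increasing left derivative (so $\d T(h)'$ is a well-defined Stieltjes measure), and $T(h)(a)=h(a)$, $T(h)(b)=h(b)$. In particular any $t_0\in U$ must be interior to $[a,b]$, which is what allows a two-sided local perturbation.

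Fix $t_0\in U$ and set $\eta=T(h)(t_0)-h(t_0)>0$. Using continuity of the gap $T(h)-h$ together with uniform continuity of $T(h)$ on $[a,b]$, I would choose $\rho>0$ small enough that $[t_0-\rho,t_0+\rho]\subset(a,b)$, that $T(h)-h\geq\eta/2$ on this interval, and that the oscillation of $T(h)$ there is strictly smaller than $\eta/2$. I then define a competitor $g$ equal to $T(h)$ outside $(t_0-\rho,t_0+\rho)$ and equal to the chord of $T(h)$ over $[t_0-\rho,t_0+\rho]$ inside. Three properties must be checked: that $g$ is concave (because the slope of the chord lies between the left and right derivatives of $T(h)$ at the two junction points, by monotonicity of $T(h)'$), that $g\leq T(h)$ (equality outside, and below by concavity inside), and, crucially, that $g\geq h$.

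The last property is where the quantitative choice of $\rho$ enters, and it is the main obstacle, since a priori a chord lying below $T(h)$ could dip beneath $h$: on the small interval the chord differs from $T(h)$ by at most the oscillation, hence by less than $\eta/2$, whereas $T(h)$ exceeds $h$ by at least $\eta/2$ there, so the chord still dominates $h$; outside the interval $g=T(h)\geq h$ trivially. Once $g$ is exhibited as a concave majorant of $h$ with $g\leq T(h)$, minimality of the least concave majorant forces $g=T(h)$, so $T(h)$ coincides with its chord on $[t_0-\rho,t_0+\rho]$ and is therefore affine near $t_0$. Consequently $T(h)'$ is constant on that neighbourhood, $\d T(h)'$ vanishes there, and $t_0\notin\operatorname{supp}\d T(h)'$. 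As $t_0\in U$ was arbitrary, $\operatorname{supp}\d T(h)'\subseteq\{T(h)=h\}$, which is the assertion.
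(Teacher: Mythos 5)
Your proof is correct and takes essentially the same approach as the paper: both arguments show that $T(h)$ must be affine on the open set $\{T(h)>h\}$, because otherwise one could build a concave majorant of $h$ lying strictly below $T(h)$ somewhere, contradicting minimality, and then conclude that $\d T(h)'$ puts no mass on that open set. Your explicit chord construction, with the quantitative choice of $\rho$ so that the oscillation of $T(h)$ stays below half the gap $T(h)(t_0)-h(t_0)$, is exactly the detail the paper compresses into its one-sentence claim that such a smaller concave majorant "would be possible to build."
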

\begin{pf}
Since $h$ and $T(h)$ are continuous and $T(h)(a)-h(a) = T(h)(b)-h(b)
= 0$, the set $\{T(h)>h\}$ is open. Thus it is a union of open
intervals. On such an interval, $T(h)$ is linear since otherwise it
would be possible to build a concave majorant of $h$ that would be
strictly smaller than $T(h)$ on some smaller open subinterval. Hence
$T(h)'$ is piecewise constant on the open set $\{T(h)>h\}$, so that
the support of $\dd T(h)'$ is included in the closed set
$\{T(h)=h\}$.
\end{pf}

The next lemma characterizes the least concave majorant as the
solution of a quadratic optimization problem. For any integrable
function $g$, define the function $\bar g$ on $[0,\pi]$ by
\[
\bar g(t) = \int_0^t g(s) \,\dd s .
\]
\begin{lem} \label{lem:char-quadrat}
Let $g \in\mathbb L^2([0,\pi])$. Let $G$ be defined on
$\mathbb{L}^2([0,\pi])$ by
\[
G(f) = \|f-g\|_2^2 = \int_0^\pi\{f(s) - g(s)\}^2 \,\dd s .
\]
Then $\arg\min_{f\in\mathcal{F}} G(f) = T(\bar g)'$.
\end{lem}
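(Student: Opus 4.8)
The plan is to identify $T(\bar g)'$ as the $\mathbb{L}^2$ projection of $g$ onto the closed convex cone $\mathcal{F}\cap\mathbb{L}^2([0,\pi])$ and to verify it through the variational conditions that characterize such a projection. Since $0\in\mathcal{F}$, the infimum of $G$ is finite, and because the set of non-increasing functions is closed under $\mathbb{L}^2$ convergence, the minimizer $\hat f$ exists, is unique, and is characterized by the cone conditions $\hat f\in\mathcal{F}$, $\langle g-\hat f,\hat f\rangle=0$, and $\langle g-\hat f,f\rangle\le 0$ for every $f\in\mathcal{F}\cap\mathbb{L}^2$. It therefore suffices to show that $f^\ast:=T(\bar g)'$ meets these three requirements.

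First I would record the elementary membership facts. Since $T(\bar g)$ is concave, its left derivative $f^\ast=T(\bar g)'$ is non-increasing, and it is integrable because $\int_0^\pi f^\ast=T(\bar g)(\pi)-T(\bar g)(0)=\bar g(\pi)<\infty$; thus $f^\ast\in\mathcal{F}$. That $f^\ast$ actually lies in $\mathbb{L}^2$ follows a posteriori from the identity $\|f^\ast\|_2^2=\langle g,f^\ast\rangle$ derived below together with Cauchy--Schwarz (justified by truncating to $[\varepsilon,\pi]$, where the non-increasing $f^\ast$ is bounded, and letting $\varepsilon\to 0$), or alternatively from the non-expansiveness of the projection.

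The technical heart is an integration by parts. Set $R=\bar g-T(\bar g)=\bar g-\overline{f^\ast}$, so that $R$ is absolutely continuous with $R'=g-f^\ast$ almost everywhere, $R\le 0$ on $[0,\pi]$ since $T(\bar g)\ge\bar g$, and $R(0)=R(\pi)=0$ because the least concave majorant touches $\bar g$ at both endpoints. For any $f\in\mathcal{F}\cap\mathbb{L}^2$, which is of bounded variation, integrating by parts and using $R(0)=R(\pi)=0$ to kill the boundary terms gives $\langle g-f^\ast,f\rangle=\int_0^\pi R'(s)f(s)\,\d s=-\int_0^\pi R(s)\,\d f(s)$. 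Because $f$ is non-increasing, $\d f$ is a non-positive measure, and since $R\le 0$ the right-hand side is $\le 0$; this is the dual feasibility condition $\langle g-f^\ast,f\rangle\le 0$. Taking $f=f^\ast$ in the same identity yields $\langle g-f^\ast,f^\ast\rangle=-\int_0^\pi R\,\d f^\ast$, and here Lemma~\ref{lem:charac-concmaj} is decisive: the support of $\d T(\bar g)'=\d f^\ast$ lies in $\{T(\bar g)=\bar g\}=\{R=0\}$, so this integral vanishes and the complementary slackness condition $\langle g-f^\ast,f^\ast\rangle=0$ holds.

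With the three conditions verified, the conclusion is immediate: for any $f\in\mathcal{F}\cap\mathbb{L}^2$, the expansion $G(f)-G(f^\ast)=\|f-f^\ast\|_2^2-2\langle g-f^\ast,f\rangle+2\langle g-f^\ast,f^\ast\rangle\ge\|f-f^\ast\|_2^2\ge 0$ shows that $f^\ast$ is the unique minimizer, i.e. $\arg\min_{f\in\mathcal{F}}G(f)=T(\bar g)'$. I expect the main obstacle to be making the integration by parts rigorous at the level of an absolutely continuous $R$ against a merely bounded-variation $f$, in particular confirming that the boundary contributions genuinely vanish and that all the inner products are finite once $f^\ast\in\mathbb{L}^2$ is secured; the passage from Lemma~\ref{lem:charac-concmaj} to the vanishing of $\int R\,\d f^\ast$ is the conceptual crux but is clean once the support statement is in hand.
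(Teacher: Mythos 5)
Your proof is correct and is essentially the paper's own argument in different packaging: your dual feasibility and complementary slackness conditions are precisely the paper's~(\ref{eq:positif}) and~(\ref{eq:nul}), obtained by the same integration by parts with vanishing boundary terms at $0$ and $\pi$ and the same appeal to Lemma~\ref{lem:charac-concmaj} for the support of $\d T(\bar g)'$. The only cosmetic difference is the final step, where your explicit quadratic expansion (which also yields uniqueness of the minimizer) replaces the paper's equivalent convexity argument for $t\mapsto G(\hat f+t(f-\hat f))$.
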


This result seems to be well known. It is cited, for example, in Mammen
[(\citeyear{mammen1991}), page 726] but since we have not found a
proof, we give one for
completeness.

Let $G\dvtx\mathcal{F}\mapsto{\mathbb R}$ be an arbitrary functional. It is
called Gateaux differentiable at the point $f\in\mathcal F$ if the
limit
\[
G'_{f}(h) = \lim_{t\rightarrow0} \frac{G(f+th)-G(f)}{t}
\]
exists for every $h$ such that $f+th \in\mathcal F$ for small enough
$t$.

\begin{pf*}{Proof of Lemma \ref{lem:char-quadrat}}
Denote $G(f) = \|f-g\|^2_2$ and $\hat f = T(\bar g)'$. The Gateaux
derivative of $G$ at $\hat f$ in the direction $h$ is
\[
G'_{\hat f}(h) = 2 \int_0^\pi h(t) \{\hat f(t) - g(t)\} \,\dd t .
\]
By integration by parts, and using that $T(\bar g)(\pi)-\bar
g(\pi)=T(\bar g)(0)-\bar g(0) = 0$, for any function of bounded
variation $h$, we have
%
\begin{equation} \label{eq:bv}
G'_{\hat f}(h) = - 2 \int_0^\pi\{T(\bar g)(t) - \bar g(t)\} \,\dd h(t) .
\end{equation}
By Lemma \ref{lem:charac-concmaj}, the support of the measure $\dd
\hat f$ is included in the closed set $\{T(\bar g)=\bar g\}$, and thus
%
\begin{equation} \label{eq:nul}
G'_{\hat f}(\hat f) = - 2 \int_0^\pi\{T(\bar g)(t) - \bar
g(t)\} \,\dd\hat f(t) = 0 .
\end{equation}
If $h=f-\hat f$, with $f$ monotone nonincreasing, (\ref{eq:bv})
and (\ref{eq:nul}) imply that
%
\begin{equation} \label{eq:positif}
G_{\hat f}'(f-\hat f) = - 2 \int_0^\pi\{T(\bar g)(t) - \bar g(t)\}
\,\dd f(t) \geq0 .
\end{equation}
Let $f\in\mathcal F$\vspace*{2pt} be arbitrary, and let $u$ be the function defined
on $[0,1]$ by $u(t) = G(\hat f+t(f-\hat f))$. Then $u$\vspace*{-1pt} is convex and
$u'(0) = G'_{\hat f}(f-\hat f) \geq0$ by (\ref{eq:positif}). Since
$u$ is convex, if $u'(0)\geq0$, then $u(1)\geq u(0)$, that is,
$G(f)\geq
G(\hat f)$. This proves that $\hat f = \arg\min_{f\in\mathcal{F}}
G(f)$.
\end{pf*}

Since $\hat f_n$ and $\log\tilde f_n$ are the minimizers of the
$\mathbb L^2$ distance of $I_n$ and $\log(I_n)+\gamma$,
respectively, over the convex cone of monotone functions, we can apply
Lemma \ref{lem:char-quadrat} to derive characterizations of
$\hat{f}_n$ and $\tilde{f}_n$.
\begin{theorem}
Let $\hat f_n$, $\hat F_n$ and $\tilde f_n$ be defined
in (\ref{eq:def-hatfn}), (\ref{eq:def-hatFn})
and (\ref{eq:def-tildefn}), respectively. Then
\begin{eqnarray*}
\hat{f}_{n} & = & T(F_{n})' , \\
\hat F_n(t) & = & T(F_n) , \\
\tilde{f}_{n} & = & \exp\{T(\tilde{F}_{n})'\} ,
\end{eqnarray*}
where
\begin{eqnarray*}
F_{n}(t)&=&\int_{0}^{t} I_{n}(u) \,\dd u ,\\
\tilde{F}_{n}(t) & = & \int_{0}^{t} \{\log I_{n}(u) +\gamma\} \,\dd u .
\end{eqnarray*}
\end{theorem}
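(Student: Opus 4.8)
The plan is to apply Lemma~\ref{lem:char-quadrat} directly to each of the three estimators, since that lemma already converts an $\mathbb L^2$ projection onto $\mathcal F$ into a least concave majorant derivative. The key observation is that the definitions \eqref{eq:def-hatfn} and \eqref{eq:def-tildefn} are exactly minimizations of functionals of the form $G(f) = \|f - g\|_2^2$ over $\mathcal F$, so it only remains to identify the right integrand $g$ in each case and recognize its primitive $\bar g$ as the appropriate empirical distribution function.

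First I would treat $\hat f_n$. The functional $Q(z) = \int_0^\pi (I_n(s) - z(s))^2 \, \d s$ is precisely $G(z)$ with $g = I_n$. Applying Lemma~\ref{lem:char-quadrat} with this choice gives $\hat f_n = \arg\min_{z \in \mathcal F} Q(z) = T(\bar g)'$, where $\bar g(t) = \int_0^t I_n(u) \, \d u = F_n(t)$. This yields $\hat f_n = T(F_n)'$ immediately. For $\hat F_n$, I would recall from \eqref{eq:def-hatFn} that $\hat F_n(t) = \int_0^t \hat f_n(s) \, \d s = \int_0^t T(F_n)'(s) \, \d s$, and then invoke the identity $T(h)(t) = \int_a^t T(h)'(s)\,\d s$ stated in the excerpt just before Lemma~\ref{lem:charac-concmaj} (with the boundary condition $T(F_n)(0) = F_n(0) = 0$). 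This shows $\hat F_n(t) = T(F_n)(t)$, as claimed.

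For $\tilde f_n$, the argument is entirely parallel: $\tilde Q(z) = \int_0^\pi \{\log I_n(s) + \gamma - \log z(s)\}^2 \, \d s$. The minimization in \eqref{eq:def-tildefn} is over $z \in \mathcal F$, but the integrand depends on $\log z$ rather than $z$. I would note that the map $z \mapsto \log z$ carries $\mathcal F$ onto the cone of monotone non-increasing functions (log is increasing, so monotonicity is preserved), so setting $w = \log z$ we are minimizing $\int_0^\pi \{(\log I_n + \gamma) - w\}^2 \, \d s$ over monotone non-increasing $w$. Applying Lemma~\ref{lem:char-quadrat} with $g = \log I_n + \gamma$ gives the minimizing $w$ equal to $T(\bar g)' = T(\tilde F_n)'$, where $\bar g(t) = \int_0^t (\log I_n(u) + \gamma) \, \d u = \tilde F_n(t)$. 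Since $\tilde f_n = \exp(w)$ by definition, we obtain $\tilde f_n = \exp\{T(\tilde F_n)'\}$.

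The main point requiring care is the last step: one must verify that minimizing $\tilde Q$ over $z \in \mathcal F$ is genuinely equivalent, via the substitution $w = \log z$, to minimizing over all monotone non-increasing $w$, i.e. that the image of $\mathcal F$ under $\log$ coincides with (or is dense enough in) the full cone of monotone non-increasing functions so that Lemma~\ref{lem:char-quadrat} applies without a domain mismatch. Since $\exp$ maps any monotone non-increasing function back into a positive monotone non-increasing function, this bijection is clean, and the integrability conditions defining $\mathcal F$ cause no obstruction for the technically-discretized step-function version of $I_n$ used in the log-periodogram case. I expect no serious difficulty beyond this bookkeeping, as the heavy lifting is already done by Lemma~\ref{lem:char-quadrat}.
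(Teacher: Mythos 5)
Your proposal is correct and follows essentially the same route as the paper: the paper obtains the theorem as an immediate consequence of Lemma~\ref{lem:char-quadrat}, applied with $g = I_n$ (so that $\bar g = F_n$) and, via the identification of $\log\tilde f_n$ as the minimizer of the $\mathbb L^2$ distance to $\log I_n + \gamma$ over the cone of monotone functions, with $g = \log I_n + \gamma$ (so that $\bar g = \tilde F_n$), together with the identity $T(h)(t) = \int_0^t T(h)'(s)\,\d s$ for the statement about $\hat F_n$. Your additional remark verifying that the substitution $w = \log z$ causes no domain mismatch (using that $I_n$ is a positive step function in the log-periodogram case) is a point the paper passes over silently, but it is the same argument in substance.
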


Standard and well-known algorithms for calculating the map $y\mapsto
T(y)'$ are the pool adjacent violators algorithm (PAVA), the minimum
lower set algorithm (MLSA) and the min--max formulas; cf.
\citet{RobWriDyk88}. Since the maps $T$ and $T'$ are continuous
operations, in fact the algorithms PAVA and MLSA will be
approximations that solve the discrete versions of our problems,
replacing the integrals in $Q$ and $\tilde{Q}$ with approximating
Riemann sums. Note that the resulting estimators are order-restricted
means; the discrete approximations entail that these are approximated
as sums instead of integrals. The approximation errors are similar to
the ones obtained, for example, for the methods in
\citet{mammen1991} and \citet{anevskihossjer2006}.

\section{Lower bound for the local asymptotic minimax risk}
\label{sec:lowerbound}
We establish a lower bound for the minimax risk when estimating a
monotone spectral density at a fixed point. This result will be
proved by looking at parametrized subfamilies of spectral densities in
an open set of densities on ${\mathbb R}^n$; the subfamilies can be
seen as (parametrized) curves in the set of monotone spectral
densities. The topology used will be the one generated by the metric
\[
{\rho}(f,g) = \int_{{\mathbb R}}^{}|f(x)-g(x)|\,dx
\]
for $f,g$ spectral density functions on $[-\pi,\pi] $. Note first that
the distribution of a stochastic process is not uniquely defined by
the spectral density. To accomodate this, let $\mathcal{L}_g$ be the set
of all laws of stationary processes (i.e., the translation invariant
probability distributions on ${\mathbb R}^{\infty}$) with spectral
density~$g$.

Let ${\varepsilon}>0,c_{1},c_{2}$ be given finite constants, and let
$t_{0}>0$, the point at which we want to estimate the spectral
density, be given.
\begin{definition}
For each $n\in{\mathbb Z}$ let $\mathcal{G}^{1}:=\mathcal{
G}^{1}({\varepsilon},c_{1},c_{2},t_{0})$ be a set of monotone $C^{1}$
spectral densities $g$ on $[0,\pi]$, such that
%
\begin{eqnarray}
\label{eq:A}
&\displaystyle\sup_{|t-t_0|<\varepsilon} g'(t)  <  0 ,& \\
\label{eq:B}
&\displaystyle c_{1} < \inf_{|t-t_{0}|<{\varepsilon}} g(t)  <
\sup_{|t-t_{0}|<{\varepsilon}} g(t) < c_{2} .&
\end{eqnarray}
\end{definition}
\begin{theorem} \label{theo:lowerbound}
For every open set $U$ in $\mathcal{G}^{1}$ there is a positive
constant $c(U)$ such that
\[
\liminf_{n\to\infty} \inf_{T_{n}} \sup_{g\in U} \sup_{L \in
\mathcal{ L}_g} n^{2/3} \esp_{L} \bigl[\bigl(T_{n}-g(t_{0})\bigr)^{2}\bigr] \geq c(U) ,
\]
where the infimum is taken over all functions $T_n$ of the data.
\end{theorem}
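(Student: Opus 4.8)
The plan is to reduce the minimax bound to a two‑point testing problem at the Chernoff scale $n^{-1/3}$ and to apply Le Cam's method; here $n^{1/3}$ is exactly the rate at which $\hat f_n(t_0)-g(t_0)$ is normalized in the limit theorems of Section~\ref{sec:limit-distr}, so the assertion amounts to saying that the scaled error $n^{1/3}(T_n-g(t_0))$ cannot be driven to zero in $L^2$ uniformly over $U$, i.e.\ $\liminf_n\inf_{T_n}\sup_{g\in U}\sup_L\esp_L[(n^{1/3}(T_n-g(t_0)))^2]>0$. The quantifiers are favourable: since the innermost supremum runs over \emph{all} laws $L\in\mathcal L_g$, a lower bound follows by evaluating the risk at any single convenient member, and I would take $L$ to be the stationary Gaussian law with spectral density $g$, whose joint density on $\mathbb R^n$ is explicit and for which two such laws are compared through Whittle's approximation.

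First I would fix a centre $g^\ast\in U$ and oppose the two competitors $g_0=g^\ast+\tfrac12\eta_n$ and $g_1=g^\ast-\tfrac12\eta_n$, where $\eta_n$ is a fixed smooth bump centred at $t_0$, of width $h_n$ and amplitude $\delta_n$, with $\eta_n(t_0)\asymp\delta_n$. The strict slope condition~(\ref{eq:A}) keeps both competitors decreasing precisely when $|\eta_n'|\lesssim 2|g^{\ast\prime}|$, that is when $\delta_n\lesssim h_n$; this monotonicity constraint is what ties the achievable separation to the window width. Since $\|\eta_n\|_1\asymp\delta_n h_n\to0$, both $g_0$ and $g_1$ converge to $g^\ast$ in the $\rho$‑metric, so for $n$ large they lie in the open set $U$ and inherit~(\ref{eq:A}) and~(\ref{eq:B}).

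Next I would bound the divergence between the Gaussian laws $L_0,L_1$ carried by $g_0,g_1$. Whittle's formula gives $\mathrm{KL}(L_0\Vert L_1)\asymp\frac{n}{4\pi}\int_0^\pi\{g_0/g_1-1-\log(g_0/g_1)\}\asymp n\int_0^\pi\eta_n^2\asymp n h_n\delta_n^2$, the two densities differing only on a window of width $h_n$ by a relative amount of order $\delta_n$. Le Cam's two‑point inequality then yields $\inf_{T_n}\max_{j\in\{0,1\}}\esp_{L_j}[(T_n-g_j(t_0))^2]\gtrsim\delta_n^2\,(1-\sqrt{\mathrm{KL}/2})$. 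Balancing the monotonicity constraint $\delta_n\asymp h_n$ against the indistinguishability requirement $n h_n\delta_n^2\asymp1$ forces $h_n\asymp\delta_n\asymp n^{-1/3}$, so that the squared error is bounded below by a constant multiple of $\delta_n^2\asymp n^{-2/3}$ and the $n^{1/3}$‑scaled squared error $\esp_{L_j}[(n^{1/3}(T_n-g_j(t_0)))^2]$ stays bounded away from $0$. As $g_0,g_1\in U$ for large $n$ and $\sup_{L\in\mathcal L_{g_j}}$ dominates the Gaussian value, this is precisely the claimed local minimax lower bound.

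The step I expect to be the main obstacle is the rigorous justification of the Whittle heuristic: one must show that the exact Kullback--Leibler divergence between the two stationary Gaussian processes, governed by the Toeplitz covariance operators associated with $g_0$ and $g_1$, is genuinely of order $n h_n\delta_n^2$ with a constant uniform over the construction, i.e.\ control the approximation error of Whittle's formula for the $C^1$, bounded‑below densities in play. A secondary and more routine point, settled by the openness of $U$ in the $\rho$‑metric and the strictness of~(\ref{eq:A})--(\ref{eq:B}), is to check that the bump $\eta_n$ can be chosen so that both competitors remain genuinely inside the monotone cone and inside $U$ for every large $n$.
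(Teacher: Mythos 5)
Your proposal is correct in substance, but it takes a genuinely different route from the paper. The paper does not use a two-point test: it fixes a one-parameter family $g_\theta(t)=g(t)+\theta k((t-t_0)/h)$ with $\Theta=(-\delta h,\delta h)$ (your monotonicity constraint $\delta_n\lesssim h_n$ appears there as $|\theta/h|<\delta$) and applies the van Trees inequality, i.e.\ a Bayesian Cram\'er--Rao bound with an arbitrary rescaled prior on $\Theta$, cf.\ \cite{gill:levit:1995}. The Gaussian Fisher information $I_n(\theta)=\frac12\mathrm{tr}\left(\{M_n^{-1}(g_\theta)M_n(\partial_\theta g_\theta)\}^2\right)$ is bounded by a constant times $nh\int k^2$ using three elementary facts: $\mathrm{tr}(AB)\leq\rho(A)\mathrm{tr}(B)$, $\rho(M_n^{-1}(\phi))\leq a^{-1}$ when $\phi\geq a>0$, and the Parseval--Bessel bound $\mathrm{tr}(\{M_n(\phi)\}^2)\leq n\int_{-\pi}^{\pi}\phi^2$; choosing $h=n^{-1/3}$ then yields the $n^{-2/3}$ bound on the squared risk directly, with no testing reduction and no total-variation bookkeeping. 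Your Le Cam scheme is the more elementary alternative, and the step you flag as the main obstacle --- rigorous control of the Whittle heuristic for the Kullback--Leibler divergence --- in fact dissolves under exactly the same three matrix inequalities: writing $\Delta=M_n^{-1/2}(g_1)\{M_n(g_0)-M_n(g_1)\}M_n^{-1/2}(g_1)$, one has $\mathrm{KL}(L_0\Vert L_1)=\frac12\{\mathrm{tr}(\Delta)-\log\det(I+\Delta)\}\leq\frac12\mathrm{tr}(\Delta^2)\leq\frac12\rho\left(M_n^{-1}(g_1)\right)^2\mathrm{tr}\left(\{M_n(g_0-g_1)\}^2\right)\leq Cn\|g_0-g_1\|_2^2\asymp nh_n\delta_n^2$, valid since $\rho(\Delta)\to0$ under your scaling. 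Note that only this upper bound is needed for Le Cam; your claimed two-sided ``$\asymp$'' (full Whittle asymptotics) is more than the argument requires. What each approach buys: van Trees handles the continuum of alternatives in one stroke and avoids the constant $1-\sqrt{\mathrm{KL}/2}$ degenerating, while your two-point argument is conceptually simpler and makes the $\delta_n\asymp h_n\asymp n^{-1/3}$ trade-off between monotonicity and indistinguishability completely transparent.

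Two shared caveats, neither fatal: both arguments need the perturbed density bounded away from zero on all of $[-\pi,\pi]$ (to control $\rho(M_n^{-1})$), whereas~(\ref{eq:B}) only controls $g$ near $t_0$; the paper hedges this with ``if $g$ is bounded below'', and your construction should do the same (and symmetrize the bump so the densities stay even). Finally, your reading of the normalization as $\esp[(n^{1/3}(T_n-g(t_0)))^2]=n^{2/3}\esp[(T_n-g(t_0))^2]$ is the right one: what both your argument and the paper's actually deliver is $\sup_g\esp[(T_n-g(t_0))^2]\geq c\,n^{-2/3}$, so the $n^{1/3}$ prefactor in the statement should be read as $n^{2/3}$ (an apparent typo, consistent with the announced rate $n^{-1/3}$ for the estimation error itself).
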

\begin{pf}
Let $k$ be a fixed real valued continuously differentiable function,
with support $[-1,1]$ such that $\int k(t) \,dt=0, k(0)=1$ and ${\sup}
|k(t)|\leq1$. Then, since $k'$ is continuous with compact support,
$|k'|<C$ for some constant $C<\infty$.

For fixed $h>0$, define a parametrized family of spectral densities
$g_{{\theta}}$ by
\[
g_{{\theta}}(t) = g(t) + {\theta} k \biggl(\frac{t-t_{0}}{h} \biggr) .
\]
Obviously, $\{g_{{\theta}}\}_{\theta\in\Theta}$ are $C^{1}$ functions.
Since
\[
g_{{\theta}}'(t)=g'(t)+\frac{{\theta}}{h}k' \biggl(\frac{t-t_{0}}{h} \biggr) ,
\]
and since $k'$ is bounded, we have that, for $|t-t_0|<\varepsilon$,
$g_{{\theta}}'(t)<0$ if $|{{\theta}}/{h}|<{\delta}$, for some
${\delta}=\delta(C)>0$. Thus, in order to make the parametrized
spectral densities $g_{\theta}$ strictly decreasing in the
neighborhood $\{t\dvtx|t-t_0|<\varepsilon\}$, the parameter space for
${\theta}$ should be chosen as
\[
{\Theta} =(-{\delta}h,{\delta}h).
\]
We will use the van Trees inequality [cf.
\citet{gilllevit1995}, Theorem 1]
for the estimand $g_{{\theta}}(t_{0})= g(t_{0})+{\theta}$. Let
${\lambda}$ be an arbitrary prior density on ${\Theta}$. Then, for
sufficiently small ${\delta}$, $\{g_{\theta}\dvtx\theta\in
\Theta\}\subset U$ (cf. the definition of the metric $\rho$). Let
$P_{\theta}$ denote the distribution of a Gaussian process with spectral
density $g_{\theta}$, and $\esp_\theta$ the corresponding
expectation. Then
\begin{eqnarray*}
\sup_{g \in U}\sup_{L\in\mathcal{L}_g}{\mathbb
E}_{L}\bigl(T_{n}-g(t_{0})\bigr)^{2} & \geq & \sup_{{\theta}\in{\Theta}} {
\mathbb E}_{{{\theta}}}\bigl(T_{n}-g_{{\theta}}(t_{0})\bigr)^{2} \\
& \geq & \int_{{\Theta}} E_{{\theta}}
\bigl(T_{n}-g_{\theta}(t_{0})\bigr)^{2}{\lambda}({\theta}) \,d{\theta} .
\end{eqnarray*}
Then, by the Van Trees inequality, we obtain
%
\begin{equation} \label{eq:vt}
\int_{{\Theta}} \mathbb E_{{\theta}}
\bigl(T_{n}-g_{\theta}(t_{0})\bigr)^{2}{\lambda}({\theta}) \,d{\theta}
\geq\frac{1} {\int I_n(\theta)\lambda(\theta) \,\dd\theta+
\tilde{I}(\lambda) },
\end{equation}
where
\[
I_n(\theta) = \tfrac12 \operatorname{tr} ( \{M_n^{-1}(g_\theta)
M_n(\partial_\theta g_\theta) \}^2 )
\]
is the Fisher information matrix [cf. \citet{dzhaparidze1986}] with
respect to the parameter $\theta$ of a Gaussian process with spectral
density $g_\theta$, and for any even nonnegative integrable function
$\phi$ on $[-\pi,\pi]$, $M_n(\phi)$ is the Toeplitz matrix of order~$n$
\[
M_n(\phi)_{i,j} = \int_{-\pi}^\pi\phi(x) \cos\bigl((i-j)x\bigr) \,\dd x .
\]
For any $n\times n$ nonnegative symmetric matrix $A$, define
the spectral radius of $A$ as
\[
\rho(A) = \sup\{ u^t A u \mid u^tu=1\} ,
\]
where $u^t$ denotes transposition of the vector $u$, so that $\rho(A)$
is the the largest eigenvalue of $A$. Then, for any $n\times n$ matrix
$B$,
\[
\operatorname{tr}(AB) \leq\rho(A) \operatorname{tr}(B) .
\]
If $\phi$ is bounded away from zero, say $\phi(x) \geq a >0$ for all
$x\in[-\pi,\pi]$, then
\[
\rho(M_n^{-1} (\phi)) \leq a^{-1} .
\]
By the Parseval--Bessel inequality,
\[
\operatorname{tr}(\{M_n(\phi)\}^2) \leq n \int_{-\pi}^\pi\phi^2(x) \,\dd
x .
\]
Thus, if $g$ is bounded below, then $I_n(\theta)$ is bounded by some
constant times
\[
n \int_{-\pi}^\pi k^2\bigl((t-t_0)/h\bigr) \,\dd t = nh \int k^2(t) \,\dd t .
\]

In order to get an expression for $\tilde{I}({\lambda})$, let
${\lambda}_{0}$
be an arbitrary density on $(-1,1)$, and define the prior density on
${\Theta}=(-\delta h,\delta h)$ as ${\lambda}({\theta})=\frac
{1}{\delta
h}{\lambda}_{0}(\frac{{\theta}}{\delta h})$. Then
\[
\tilde{I}({\lambda}) = \int_{-\delta h}^{\delta h}
\frac{({\lambda}'({\theta}))^{2}}{{\lambda}({\theta})} \,\dd{\theta}
= \frac{1}{\delta^2 h^{2}} \int_{-1}^{1}
\frac{{\lambda}_{0}'(u)^{2}}{{\lambda}_{0}(u)} \,\dd u =
\frac{I_{0}}{\delta^{2}h^2} .
\]
Finally, plugging the previous bounds into (\ref{eq:vt})
yields, for large enough $n$,
\[
\sup_{g\in U} \sup_{L\in\mathcal{L}_g}{\mathbb E}_{L}\bigl(T_{n}(t_{0}) -
g(t_{0})\bigr)^{2} \geq\frac{1}{nhc_{3} + {I_{0}}{{\delta}^{-2}h^{-2}}},
\]
which, if $h=n^{-1/3}$, becomes
\[
\sup_{g\in U} \sup_{L\in\mathcal{L}_g} {\mathbb E}_{L} [ \{
T_{n}(t_{0}) - g(t_{0}) \}^{2} ] \geq c_4n^{-2/3} ,
\]
for some positive constant $c_4$. This completes the proof of
Theorem \ref{theo:lowerbound}.
\end{pf}

\section{Limit distribution results}
\label{sec:limit-distr}
We next derive the limit distributions for $\hat{f}_n$ and
$\tilde{f}_n$ under general assumptions. The main tools used are local
limit distributions for the rescaled empirical spectral\vspace*{2pt} distribution
function $F_n$ and empirical log-spectral distribution function
$\tilde{F}_n$, respectively, as well as
maximal bounds for the rescaled processes. These will be coupled with
smoothness results for the least
concave majorant map established
in \citet{anevskihossjer2006}, Theorems 1 and~2.
The proofs are postponed to Section \ref{section:proofs}.
%

\subsection{The limit distribution for the estimator $\hat{f}_n$}

\begin{assumption}
The process $\{X_i, i\in{\mathbb Z}\}$ is linear with respect to
an i.i.d. sequence $\{\varepsilon_i, i\in{\mathbb Z}\}$ with zero
mean and unit variance, that is,
%
\begin{equation} \label{eq:linproc}
X_k = \sum_{j=0}^{\infty} a_j \varepsilon_{k-j} ,
\end{equation}
where the sequence $\{a_j\}$ satisfies
%
\begin{equation}\label{eq:spectdens}
\sum_{j=1}^{\infty} (j^{1/2}|a_j| + j^{3/2} a_j^2) < \infty.
\end{equation}
\end{assumption}
\begin{remark}
Condition (\ref{eq:spectdens}) is needed to deal with remainder
terms and apply the results of \citet{mikoschnorvaisa1997} and
\citet{brockwelldavis1991}. It is implied, for instance, by the
simpler condition
%
\begin{equation}
\sum_{j=1}^{\infty} j^{3/4}|a_j| < \infty.
\end{equation}
It is satisfied by most usual linear time series such as causal
invertible ARMA processes.
\end{remark}

The spectral density of the process $\{X_i\}$ is given by
\[
f(u) = \frac{1}{2\pi} \Biggl|\sum_{j=0}^{\infty} a_j
e^{ij u} \Biggr|^2 .
\]
Unfortunately, there is no explicit condition on the coefficients
$a_j$ that implies monotonicity of $f$, but the coefficients $a_j$ are
not of primary interest here.

The limiting distribution of the estimator will be expressed in terms
of the
so-called Chernoff distribution, that is, the law of a random variable
$\zeta$ defined by $\zeta= \arg\max_{s\in\mathbb R} \{W(s)-s^2\}$,
where $W$ is
a standard two-sided Brownian motion. See \citet{groeneboomwellner2001} for
details about this distribution.
%
%
%
\begin{theorem} \label{theo:lineaire-monotone}
Let $\{X_i\}$ be a linear process such that (\ref{eq:linproc})
and (\ref{eq:spectdens}) hold and $\mathbb{E}[\varepsilon_0^8] <
\infty$. Assume that its spectral density $f$ belongs to $\mathcal{
F}$. Assume $f'(t_0)<0$ at the fixed point $t_0$. Then, as
$n\rightarrow\infty$,
\[
n^{1/3} \bigl(\hat{f}_n(t_0) - f(t_0)\bigr) \stackrel{\mathcal{L}}{\rightarrow
} 2\{-\pi
f^2(t_0) f'(t_0)\}^{1/3} \zeta.
\]
\end{theorem}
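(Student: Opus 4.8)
The plan is to reduce everything to the behaviour of the least concave majorant via the identification $\hat f_n = T(F_n)'$ from the previous theorem, where $F_n(t) = \int_0^t I_n(u)\,\d u$. Since $T$ and its derivative are local operations that commute with positive scaling, with addition of affine functions, and with affine reparametrizations of the domain, I would localize at $t_0$ on the scale $n^{-1/3}$ dictated by the rate and study the rescaled process
\begin{gather*}
\Phi_n(s) = n^{2/3}\left[ F_n(t_0 + n^{-1/3}s) - F_n(t_0) - f(t_0)\, n^{-1/3}s \right] \; .
\end{gather*}
A short computation using these invariances of $T$ shows that the slope at the origin of the least concave majorant of $\Phi_n$ is exactly $T(\Phi_n)'(0) = n^{1/3}(\hat f_n(t_0) - f(t_0))$, so the whole problem becomes: identify the limit of $T(\Phi_n)'(0)$. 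The smoothness/continuity results for the map $T$ in \citet[Theorems 1 and 2]{anevski:hossjer:2006} are precisely what will transfer convergence of $\Phi_n$ into convergence of this slope.

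First I would establish the local limit theorem for $\Phi_n$. Splitting $F_n = F + (F_n - F)$ and Taylor expanding the smooth deterministic part with $F' = f$, $F'' = f'$ gives
\begin{gather*}
n^{2/3}\left[ F(t_0+n^{-1/3}s) - F(t_0) - f(t_0)\,n^{-1/3}s\right] \longrightarrow \tfrac12 f'(t_0)\,s^2 \; .
\end{gather*}
For the stochastic part I would invoke the functional central limit theorem for the integrated periodogram of a linear process (\cite{mikosch:norvaisa:1997,dahlhaus:1989}), under which $\sqrt n\,(F_n - F)$ converges to a Gaussian process whose local increment variance rate is $2\pi f^2$. The key observation is that although the limiting covariance of $\sqrt n (F_n-F)$ also carries a fourth-cumulant term of product form $(\int_0^{\lambda_1} f)(\int_0^{\lambda_2} f)$, over a window of width $n^{-1/3}$ that term contributes only at order $(n^{-1/3})^2$ to the increment variance and is therefore negligible against the leading order-$n^{-1/3}$ contribution of $2\pi f^2$; this is exactly why the non-Gaussianity of the innovations disappears and only $f^2(t_0)$ survives. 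The conditions (\ref{eq:spectdens}) and $\mathbb E[\epsilon_0^8] < \infty$ serve to control the bias and the remainder terms in this step. One obtains
\begin{gather*}
\Phi_n(s) \stackrel{\mathcal L}{\longrightarrow} \Phi(s) := \sqrt{2\pi f^2(t_0)}\,W(s) + \tfrac12 f'(t_0)\,s^2 \; ,
\end{gather*}
for a two-sided standard Brownian motion $W$.

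The main obstacle is the passage from this compact convergence of $\Phi_n$ to convergence of $T(\Phi_n)'(0)$, since the slope-of-majorant functional is not globally continuous. This requires (i) a.s. uniqueness and well-posedness of the slope of the majorant of $\Phi$, which holds because $\tfrac12 f'(t_0)<0$ makes the parabola strictly concave and forces the touching point to localize, and (ii) uniform maximal bounds on the rescaled empirical spectral process away from $t_0$, guaranteeing that the relevant touching point cannot escape to large $|s|$. Supplying these maximal inequalities — again through the moment condition $\mathbb E[\epsilon_0^8]<\infty$ and the summability (\ref{eq:spectdens}) — is the technical heart of the argument and is what makes \citet[Theorems 1 and 2]{anevski:hossjer:2006} applicable; it yields $n^{1/3}(\hat f_n(t_0) - f(t_0)) = T(\Phi_n)'(0) \stackrel{\mathcal L}{\to} T(\Phi)'(0)$.

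Finally I would identify $T(\Phi)'(0)$ with Chernoff's distribution by Brownian scaling. Writing $\sigma^2 = 2\pi f^2(t_0)$ and $\beta = -\tfrac12 f'(t_0) > 0$, the switch relation (\cite{groeneboom:wellner:2001}) gives $T(\Phi)'(0) \le a \iff \arg\max_s\{\sigma W(s) - \beta s^2 - as\} \le 0$; substituting $s = (\sigma/\beta)^{2/3} u$ and using $W(cu) \stackrel{d}{=} \sqrt c\,W(u)$ reduces the objective to the canonical form $W(u) - u^2$ up to a positive multiplicative constant and a linear drift that shifts the argmax. This yields $T(\Phi)'(0) \stackrel{d}{=} 2\sigma^{2/3}\beta^{1/3}\,\zeta$ with $\zeta = \arg\max_s\{W(s)-s^2\}$, and since
\begin{gather*}
2\sigma^{2/3}\beta^{1/3} = 2\{\sigma^2\beta\}^{1/3} = 2\{2\pi f^2(t_0)\cdot(-\tfrac12 f'(t_0))\}^{1/3} = 2\{-\pi f^2(t_0) f'(t_0)\}^{1/3} \; ,
\end{gather*}
this recovers the stated constant and completes the proof.
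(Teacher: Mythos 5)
Your overall architecture coincides with the paper's own proof: rescale at the $n^{-1/3}$ scale (your $\Phi_n$ is exactly the paper's $\tilde v_n(\cdot;t_0)+g_n$, and the exact identity $T(\Phi_n)'(0)=n^{1/3}(\hat f_n(t_0)-f(t_0))$ is correct), identify the limit as $\sqrt{2\pi}f(t_0)W(s)+\frac12 f'(t_0)s^2$, pass to the slope of the least concave majorant via \citet[Theorems~1 and~2]{anevski:hossjer:2006} supplemented by maximal bounds that keep the touching point from escaping, and finish with the switch relation and Brownian scaling. Your scaling algebra and the resulting constant $2\{-\pi f^2(t_0)f'(t_0)\}^{1/3}$ are correct and match the paper.

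However, there is a genuine gap at the technical heart: the local limit theorem for $\Phi_n$. You deduce it by invoking the global FCLT for $\sqrt n(F_n-F)$ and then reasoning about the increment covariance of the \emph{limiting} Gaussian process over windows of width $n^{-1/3}$. That deduction is invalid: weak convergence of $\sqrt n(F_n-F)$ in the sup-norm topology controls the process at fixed scales only and gives no information about increments over shrinking windows magnified by the extra factor $n^{1/6}$ (note $n^{2/3}=n^{1/6}\cdot n^{1/2}$); neither the finite-dimensional convergence nor the tightness of $\Phi_n$ follows from it. Your fourth-cumulant computation is a property of the limit process, not a statement about $F_n$ at local scales. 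This is precisely why the paper proves the local CLT from scratch in Lemma~\ref{lemma:rescaled}: it decomposes $I_n=f\,I_n^{(\epsilon)}+r_n$ with $I_n^{(\epsilon)}$ the innovation periodogram, kills the remainder uniformly using the bound (5.11) of \cite{mikosch:norvaisa:1997} (a uniform bound over a function class, not their FCLT), isolates the sample-variance term $d_n^{-2}\{\hat\gamma_n(0)-1\}F(d_ns)$ --- which is where your fourth-cumulant heuristic actually lives, shown there to be $O_P(\sqrt{d_n})$ --- and establishes Brownian convergence of the remaining martingale part $S_n$ via the martingale CLT with Rosenthal-inequality moment bounds for tightness. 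The maximal bounds you flag but do not supply are the second piece of the technical work (the paper's Lemma~\ref{lem:truncation}, proved by similar increment bounds and a blocking argument over $s_j=(s_0+j)^\rho$); flagging them is acceptable in a proposal, but combined with the invalid derivation of the local CLT, the two steps carrying essentially all the difficulty remain unproven.
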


\subsection{The limit distributions for the estimator $\tilde{f}_n$}

In this section, in order to deal with the technicalities of the
log-periodogram, we make the following assumption.
\begin{assumption} \label{hypo:gaussien}
The process $\{X_k\}$ is Gaussian. Its spectral density $f$ is
monotone on $(0,\pi]$ and can be expressed as
\[
f(x) = |1-{e}^{i x}|^{-2d} f^*(x) ,
\]
with $|d| < 1/2$ and $f^*$ is bounded above and away from zero and
there exists a constant $C$ such that for all $x,y\in(0,\pi]$,
\[
|f(x) - f(y)| \leq C \frac{|x-y|}{x \wedge y} .
\]
\end{assumption}
\begin{remark}
This condition is usual in the long memory literature. Similar
conditions are assumed in \citet{robinson1995l}, Assumption 2,
\citet{moulinessoulier1999}, Assumption 2,
\citet{soulier2001}, Assumption 1 (with a typo). It is used to
derive covariance bounds for the discrete Fourier transform
ordinates of the process, which yield covariance bounds for
nonlinear functionals of the periodogram ordinates in the Gaussian
case. It is satisfied by usual long memory processes such as causal
invertible $\operatorname{ARFIMA}(p,d,q)$ processes with possibly an additive
independent white noise or AR(1) process.
\end{remark}

Recall that
\[
\tilde f_n = \exp\mathop{\arg\min}_{f\in\mathcal F} \int_0^\pi\{\log f(s) -
\log I_n(s) + \gamma\}^2 \,\dd s,
\]
where $\gamma$ is Euler's constant and $I_n$ is the periodogram,
defined here as a step function
\[
I_n(t) = I_n(2\pi[nt/2\pi]/n) = \frac{2\pi}n \Biggl| \sum_{k=1}^n X_k
{e}^{i 2k\pi[nt/2\pi]/n} \Biggr|^2 .
\]
%
%
\begin{theorem} \label{theo:gaussien-lrd-monotone}
Let $\{X_i\}$ be a Gaussian process that satisfies
Assumption \ref{hypo:gaussien}. Assume $f'(t_0)<0$ at the fixed
point $t_0 \in(0,\pi)$. Then, as $n\rightarrow\infty$,
\[
n^{1/3} \{\log\tilde{f}_n(t_0) - \log f(t_0)\} \stackrel{\mathcal
{L}}{\rightarrow}
2 \biggl(\frac{-\pi^4 f'(t_0)}{3f(t_0)} \biggr)^{1/3} \zeta.
\]
\end{theorem}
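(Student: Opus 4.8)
The plan is to mirror the proof of Theorem~\ref{theo:lineaire-monotone}, with the periodogram replaced by the centered log-periodogram $\log I_n+\gamma$ and $f$ replaced by $\log f$. By the identification theorem of Section~\ref{sec:derivation}, $\log\tilde f_n = T(\tilde F_n)'$, so the quantity to analyse is $n^{1/3}\{T(\tilde F_n)'(t_0)-\log f(t_0)\}$. First I would localise at the scale $h_n=n^{-1/3}$: since $T(\cdot)'$ is covariant under the addition of an affine function to its argument and under positive linear rescaling of the abscissa, one has
\[
n^{1/3}\{T(\tilde F_n)'(t_0)-\log f(t_0)\} = T(\phi_n)'(0),
\]
where $\phi_n(s)=n^{2/3}\{\tilde F_n(t_0+sn^{-1/3})-\tilde F_n(t_0)-\log f(t_0)\,s\,n^{-1/3}\}$. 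The problem is thereby reduced to the weak convergence of the local process $\phi_n$, plus enough control of its fluctuations to push that convergence through the slope-of-the-concave-majorant map.

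Next I would split $\phi_n=\mu_n+\nu_n$, where $\mu_n$ is the drift obtained by centering $\log I_n+\gamma$ at $\log f$ and $\nu_n$ is the remaining centered noise. A second-order Taylor expansion of $t\mapsto\int_0^t\log f$ gives $\mu_n(s)\to\frac12(\log f)'(t_0)s^2=\frac{f'(t_0)}{2f(t_0)}s^2$, a strictly concave parabola because $f'(t_0)<0$. For $\nu_n$, recall that $\tilde F_n$ is, up to negligible boundary terms, the Riemann sum with weight $2\pi/n$ of the ordinates $\log I_n(\lambda_k)+\gamma$ over the Fourier frequencies $\lambda_k$. Under Assumption~\ref{hypo:gaussien} the normalised ordinates $I_n(\lambda_k)/f(\lambda_k)$ are asymptotically independent standard exponentials away from the origin, so that $\log I_n(\lambda_k)+\gamma-\log f(\lambda_k)$ are asymptotically centered, with variance $\mathrm{var}(\log Z)=\pi^2/6$ and asymptotically negligible cross-covariances. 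I would then prove a local functional central limit theorem $\nu_n\Rightarrow\sigma W$, with $W$ a standard two-sided Brownian motion and $\sigma^2$ the induced variance rate (built from $\pi^2/6$ and the density $n/(2\pi)$ of Fourier frequencies). Since $t_0$ is fixed in $(0,\pi)$, the singularity of $f$ at $0$ is irrelevant; it is precisely the covariance bound of Assumption~\ref{hypo:gaussien} that allows one to control both the dependence between log-periodogram ordinates and the error in the exponential approximation. In tandem I would establish maximal inequalities for $\nu_n$ of the type required by \citet[Theorems~1 and~2]{anevski:hossjer:2006}, ensuring that the concave-majorant slope at $0$ is governed by a bounded neighbourhood of the origin and that the limiting parabola eventually dominates the Brownian fluctuations.

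Granting these two ingredients, $\phi_n\Rightarrow\phi$, where $\phi(s)=\sigma W(s)-a s^2$ with $a=-\frac{f'(t_0)}{2f(t_0)}>0$, and the smoothness of the map $h\mapsto T(h)'(0)$ from \citet[Theorems~1 and~2]{anevski:hossjer:2006} gives $T(\phi_n)'(0)\stackrel{\cal L}{\rightarrow}T(\phi)'(0)$. The limit is identified by Brownian scaling: setting $s=\alpha u$ with $\alpha=(\sigma/a)^{2/3}$ yields $\phi(\alpha u)\stackrel{d}{=}\sigma^{4/3}a^{-1/3}\{W(u)-u^2\}$, whence $T(\phi)'(0)\stackrel{d}{=}\sigma^{2/3}a^{1/3}\,T(\psi)'(0)$ with $\psi(u)=W(u)-u^2$. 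A switching argument then identifies $T(\psi)'(0)$ with Chernoff's variable: for any slope $\lambda$ the contact point of the majorant is $\arg\max_u\{W(u)-u^2-\lambda u\}\stackrel{d}{=}\zeta-\lambda/2$, so that $T(\psi)'(0)\le\lambda\iff 2\zeta\le\lambda$, i.e. $T(\psi)'(0)\stackrel{d}{=}2\zeta$. Collecting the constants delivers the stated limit $2\{\cdots\}^{1/3}\zeta$.

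The hard part is the second paragraph: the local functional central limit theorem and the maximal bounds for the integrated centered log-periodogram of a (possibly long-memory) Gaussian process. The obstacles are the nonlinearity of the logarithm, the merely approximate independence and exponentiality of the periodogram ordinates, and the need to control the remainder uniformly over a growing local window — all to be handled through the covariance estimates supplied by Assumption~\ref{hypo:gaussien}. By contrast, the concave-majorant continuity and the closing scaling computation are essentially routine.
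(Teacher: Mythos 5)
Your plan follows the paper's own proof essentially step for step. The paper's argument (given only as a sketch) verifies conditions (AH\ref{item:conv-rescaled})--(AH\ref{item:parabole2}) of the general scheme in Section~\ref{section:proofs} with $J_n=\tilde F_n$ and $K(t)=\int_0^t\log f(2\pi[ns/2\pi]/n)\,\d s$: the drift rescales to the parabola $\frac12 As^2$ with $A=f'(t_0)/f(t_0)$; the noise part, written in terms of the ordinates $\xi_k=\log\{I_n(\lambda_k)/f(\lambda_k)\}+\gamma$, converges locally to a two-sided Brownian motion because sums of log-periodogram ordinates behave asymptotically as sums of independent centered variables with variance $\pi^2/6$ and bounded moments of all orders (the moment bounds of \cite{soulier:2001}, which is exactly what Assumption~\ref{hypo:gaussien} is designed to give); the maximal bound (AH\ref{item:cone}) comes from the same moment bounds; and the conclusion follows from \citet[Theorems~1 and~2]{anevski:hossjer:2006} plus Brownian scaling. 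Your localization, drift/noise decomposition, appeal to covariance bounds, maximal inequalities, and switching identification $T(W(u)-u^2)'(0)\stackrel{d}{=}2\zeta$ are the same ingredients in the same order, and all of those steps are sound.

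The one point where your proposal does not close is the variance of the limiting Brownian motion, and it cannot be left to ``collecting the constants.'' Your stated recipe for $\sigma^2$ (variance $\pi^2/6$ per ordinate, ordinates at spacing $2\pi/n$ carrying weight $2\pi/n$, window width $n^{-1/3}$, normalization $n^{2/3}$) gives, per unit $s$,
\begin{gather*}
\sigma^2 \;=\; \lim_{n\to\infty}\, n^{4/3}\left(\frac{2\pi}{n}\right)^{2}\frac{n\cdot n^{-1/3}}{2\pi}\cdot\frac{\pi^2}{6} \;=\; 2\pi\cdot\frac{\pi^2}{6}\;=\;\frac{\pi^3}{3}\;,
\end{gather*}
whereas the paper's proof asserts that the limit is a two-sided Brownian motion with variance $2\pi^4/3$, and it is this value, combined with $a=-f'(t_0)/(2f(t_0))$, that yields the constant $2\{-\pi^4 f'(t_0)/(3f(t_0))\}^{1/3}$ in the statement. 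With $\sigma^2=\pi^3/3$, your own scaling computation delivers $2\{-\pi^3 f'(t_0)/(6f(t_0))\}^{1/3}\zeta$ instead, which differs from the claimed limit by a factor $(2\pi)^{1/3}$. So before declaring the theorem proved you must reconcile this factor of $2\pi$ between the ordinate-level variance $\pi^2/6$ and the process-level variance $2\pi^4/3$ asserted in the paper --- either by locating where it enters the functional CLT for the integrated log-periodogram, or by concluding that one of the two constants needs correction. As written, the final sentence of your third paragraph asserts a numerical agreement that your second paragraph's recipe does not produce.
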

\begin{corollary}
Under the assumptions of Theorem \ref{theo:gaussien-lrd-monotone},
\[
n^{1/3} \{\tilde{f}_n(t_0) - f(t_0)\} \stackrel{\mathcal
{L}}{\rightarrow}
2\{-\pi^4f^2(t_0)f'(t_0)/3\}^{1/3} \zeta.
\]
\end{corollary}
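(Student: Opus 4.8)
The plan is to deduce the corollary from Theorem~\ref{theo:gaussien-lrd-monotone} by a straightforward delta-method argument, exploiting the fact that $\tilde f_n = \exp\{T(\tilde F_n)'\}$, so that passing from the log-scale to the original scale is nothing but composition with the exponential map. Writing $A_n = \log\tilde f_n(t_0) - \log f(t_0)$, the theorem asserts that $n^{1/3} A_n \stackrel{\cal L}{\rightarrow} 2(-\pi^4 f'(t_0)/(3f(t_0)))^{1/3}\zeta$. In particular the sequence $n^{1/3}A_n$ is tight, hence $n^{1/3} A_n = O_{\pr}(1)$ and $A_n \to 0$ in probability.

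First I would record the exact identity $\tilde f_n(t_0) - f(t_0) = f(t_0)(\mathrm e^{A_n} - 1)$ and apply the mean value theorem to $\exp$, producing a random $\xi_n$ lying between $0$ and $A_n$ with $\mathrm e^{A_n} - 1 = A_n \mathrm e^{\xi_n}$. Multiplying by $n^{1/3}$ then gives
\begin{gather*}
n^{1/3}\{\tilde f_n(t_0) - f(t_0)\} = f(t_0)\,\mathrm e^{\xi_n}\,(n^{1/3} A_n) \; .
\end{gather*}
Since $|\xi_n| \leq |A_n| \to 0$ in probability, the continuous mapping theorem yields $f(t_0)\mathrm e^{\xi_n} \to f(t_0)$ in probability; combined with the weak convergence of $n^{1/3}A_n$ from the theorem, Slutsky's theorem delivers
\begin{gather*}
n^{1/3}\{\tilde f_n(t_0) - f(t_0)\} \stackrel{\cal L}{\rightarrow} f(t_0) \cdot 2\left(\frac{-\pi^4 f'(t_0)}{3f(t_0)}\right)^{1/3}\zeta \; .
\end{gather*}
It then remains only to simplify the constant: writing $f(t_0) = (f(t_0)^3)^{1/3}$ and pulling it inside the cube root gives $2\{-\pi^4 f^2(t_0) f'(t_0)/3\}^{1/3}$, exactly the asserted limit.

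This argument is entirely routine and I do not anticipate any genuine obstacle. The single point that must be verified is that $A_n \to 0$ in probability, and this is immediate from the theorem, since convergence in distribution of $n^{1/3}A_n$ forces tightness and hence $A_n = O_{\pr}(n^{-1/3})$. Everything else is the standard delta method for a smooth (here exponential) transformation, and the $n^{1/3}$ normalization is preserved because the derivative of $\exp$ at the point $\log f(t_0)$ equals $f(t_0)$, which is precisely the multiplicative factor appearing above.
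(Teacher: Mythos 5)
Your proof is correct and is precisely the routine delta-method argument (exponential transformation plus Slutsky) that the paper leaves implicit, since the corollary is stated without proof as an immediate consequence of Theorem~\ref{theo:gaussien-lrd-monotone}. The constant simplification $f(t_0)\{-\pi^4 f'(t_0)/(3f(t_0))\}^{1/3} = \{-\pi^4 f^2(t_0) f'(t_0)/3\}^{1/3}$ checks out, so nothing is missing.
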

\begin{remark}
This is the same limiting distribution as in
Theorem \ref{theo:lineaire-monotone}, up to the constant
$3^{-1/3}\pi>1$. Thus the estimator $\tilde f_n$ is less efficient
than the
estimator $\hat f_n$, but the interest of $\tilde f_n$ is to be used when
long memory is suspected, that is, the spectral density exhibits a
singularity at
zero, and the assumptions of Theorem \ref{theo:lineaire-monotone} are not
satisfied.
\end{remark}

\section{Simulations and finite sample behavior of estimators}
\label{sec:simulations}

In this section we apply the nonparametric methods on simulated time
series data of sums of parametric models. The algorithms used for the
calculation of $\hat{f}_n$ and
$\tilde{f}_n$ are the discrete versions of the estimators
%
\begin{figure}[b]

\includegraphics{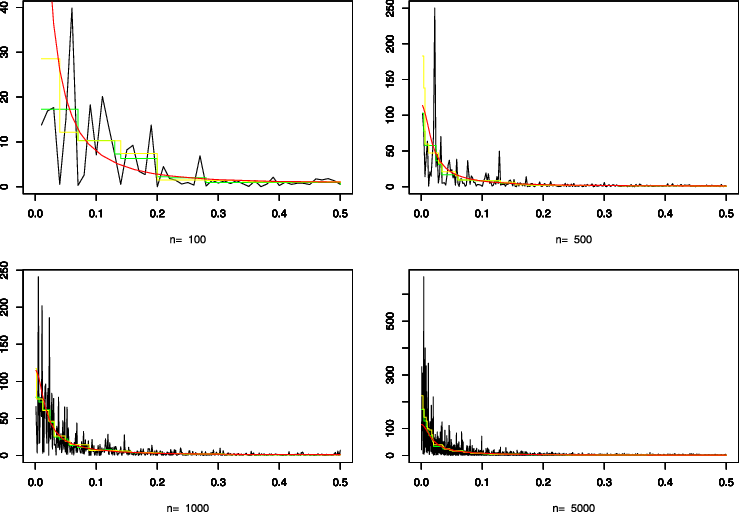}

\caption{The spectral density (red), the periodogram (black), the
estimates $\hat{f}_n$ (green) and $\tilde{f}_n$ (yellow), for
$n=100,500,1\mbox{,}000$ and $5\mbox{,}000$ data points, for Example \protect\ref{example:3ar}.}
\label{fig:3ar}
\end{figure}
$\hat{f},\tilde{f}_n$, that are obtained by doing isotonic regression
of the data $\{(\lambda_k,I_n(\lambda_k)) , k=1,\ldots,[(n-1)/2]\}$
where $\lambda_k=2\pi k/n$. For instance, the discrete version
$\hat{f}_n^d$ of $\hat{f}_n$ is calculated as
\[
\hat{f}_n^d=\mathop{\arg\min}_{z\in\mathcal{F}}\sum_{k=1}^n\bigl(I_n(\lambda
_k)-z(\lambda_k)\bigr)^2.
\]
Note that the limit distribution for $\tilde{f}_n$ is stated for the
discrete version $\tilde{f}_n^d$. The simulations were done in R,
using the ``fracdiff'' package. The code is available from the
corresponding author upon request.
\begin{example}
\label{example:3ar}
The first example consists of sums of several AR(1) processes. Let
$\{X_k\}$ be a stationary AR(1) process, that is, for all $k\in\mathbb
Z$,
\[
X_k=a X_{k-1}+\varepsilon_k ,
\]
with $|a|<1$. This process has spectral density function $f(\lambda) =
(2\pi)^{-1}\sigma^2|1-ae^{i\lambda}|^{-2}$ for $-\pi
\leq\lambda\leq
\pi$, with $\sigma^2=\operatorname{var}(\varepsilon_1^2)$ and and thus
$f$ is
decreasing on $[0,\pi]$. If $X^{(1)},\ldots,X^{(p)}$ are independent
AR(1) processes with coefficients $a_j$ such that $|a_j|<
1,j=1,\ldots,p$, and we define the process $X$ by
\[
X_k = \sum_{j=1}^p X_k^{(j)},
\]
then $X$ has spectral\vspace*{1pt} density $f(\lambda) = (2\pi)^{-1} \sum_{j=1}^p
\sigma_j^2
|1+a_je^{i\lambda}|^{-2}$ which is decreasing on $[0,\pi]$, since it
is a sum of decreasing functions. Assuming that we do not know how
many AR(1) processes are summed, we have a nonparametric problem:
estimate a monotone spectral density. Figure \ref{fig:3ar} shows a
%
\begin{figure}

\includegraphics{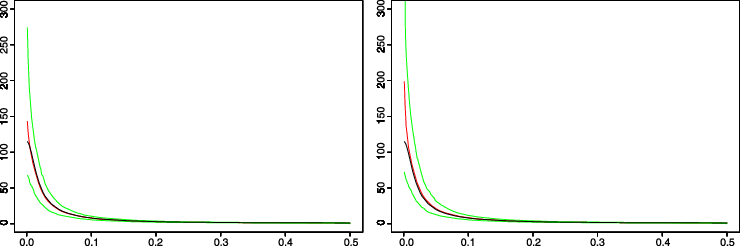}

\caption{Left plot: spectral density (black), pointwise mean of
estimates $\hat{f}_n$ (red) and
95\% confidence intervals (green). Right plot: spectral density
(black), pointwise mean of the
estimates $\tilde{f}_n$ (red) and 95\% confidence intervals (green),
for $n=1\mbox{,}000$ data points, for
Example \protect\ref{example:3ar}.}
\label{fig:3ar-mean}
\end{figure}
plot of the periodogram, the true spectral density and the
nonparametric estimators $\hat{f}_n$ and $\tilde{f}_n$ for simulated
data from a sum of three independent AR(1) processes with
$a_1=0.5,a_2=0.7,a_3=0.9$. Figure \ref{fig:3ar-mean} shows the
pointwise means and 95\% confidence intervals of $\hat{f}_n$ and
$\tilde{f}_n$ for $1\mbox{,}000$ realizations.
\end{example}
\begin{example}
\label{example:arfima}
The second example is a sum of an $\operatorname{ARFIMA}(0,d,0)$
process and an AR(1)
process. Let $X^{(1)}$ be an $\operatorname{ARFIMA}(0,d,0)$-process
with $0<d<1/2$.
This has a spectral density $(2\pi)^{-1}\sigma_1^2|1-
e^{i\lambda}|^{-2d}$. If we add an independent AR(1)-process
$X^{(2)}$ with coefficient $|a|<1$ the resulting process\vspace*{1pt}
$X=X^{(1)}+X^{(2)}$ will have spectral density $f(\lambda) =
(2\pi)^{-1}\sigma_1^2|1 - e^{i \lambda}|^{-2d} +
(2\pi)^{-1}\sigma_2^2|1 -
a e^{i \lambda}|^{-2}$ on $[0,\pi]$, and thus the
resulting spectral density $f$ will be a monotone function on
$[0,\pi]$. As above, if an unknown number of independent
processes is added we have a nonparametric estimation problem. Figure
\ref{fig:arfima} shows
a plot of the periodogram, the true spectral density and the
nonparametric estimators $\hat{f}_n$ and $\tilde{f}_n$ for simulated
time series data from a sum of an $\operatorname
{ARFIMA}(0,d,0)$-process with $d=0.2$
and an AR(1)-process with $a=0.5$. Figure \ref{fig:arfima-mean} shows
the pointwise means and 95\% confidence intervals of $\hat{f}_n$ and
$\tilde{f}_n$ for $1\mbox{,}000$ realizations.
\end{example}

Table \ref{tabell1} shows mean square root of sum of squares errors
(comparing with the true function), calculated on $1\mbox{,}000$ simulated
samples of the times series of Example \ref{example:3ar}.
Table \ref{tabell2} shows the analog values for
Example \ref{example:arfima}.

%
\begin{figure}

\includegraphics{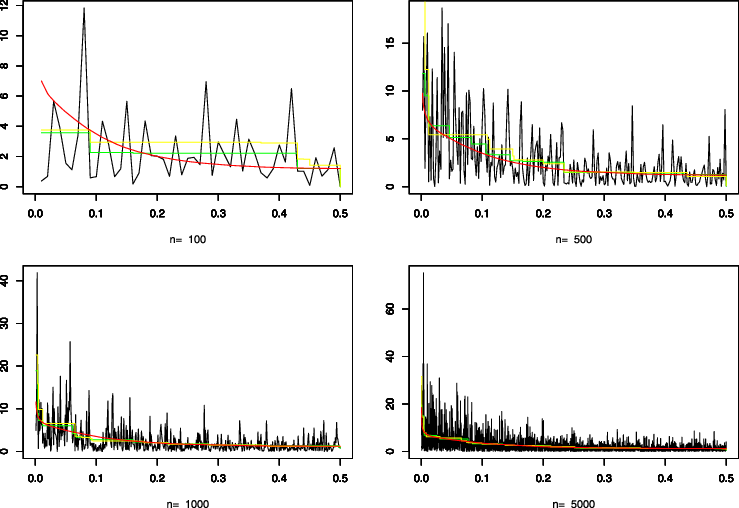}

\caption{The spectral density (red), the periodogram (black), the
estimates $\hat{f}_n$ (green) and $\tilde{f}_n$ (yellow), for
$n=100,500,1\mbox{,}000$ and $5\mbox{,}000$ data points, for
Example \protect\ref{example:arfima}.}
\label{fig:arfima}
\end{figure}

%
\begin{figure}

\includegraphics{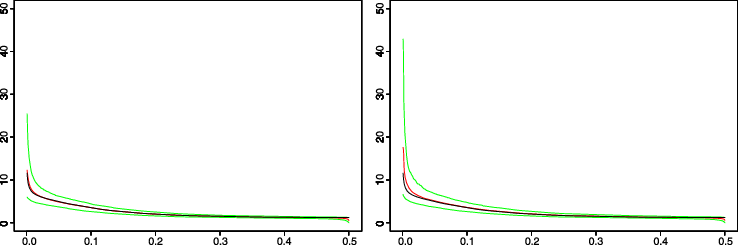}

\caption{Left plot: spectral density (black), pointwise mean of
estimates $\hat{f}_n$ (red) and
95\% confidence intervals (green). Right plot: spectral density
(black), pointwise mean of the
estimates $\tilde{f}_n$ (red) and 95\% confidence intervals (green),
for $n=1\mbox{,}000$ data points, for
Example \protect\ref{example:arfima}.}
\label{fig:arfima-mean}
\end{figure}
%

\begin{table}
\caption{MISE values for Example \protect\ref{example:3ar}}
\label{tabell1}
\begin{tabular*}{\tablewidth}{@{\extracolsep{\fill}}lcd{2.2}d{2.2}d{2.2}@{}}
\hline
\textbf{MISE} & \multicolumn{1}{c}{$\bolds{n=100}$} &
\multicolumn{1}{c}{$\bolds{n=500}$}
& \multicolumn{1}{c}{$\bolds{n=1\mbox{,}000}$} & \multicolumn{1}{c@{}}{$\bolds{n=5\mbox{,}000}$} \\
\hline
$I_n$ & 9.59 & 12.96 & 13.67 & 14.25\\
$\hat{f}_n$ & 6.38 & 5.48 & 4.76 & 2.95\\
$\tilde{f}_n$ & 9.11 & 8.52 & 7.27 & 4.26\\
\hline
\end{tabular*}
\end{table}

\begin{table}
\caption{MISE values for Example \protect\ref{example:arfima}}
\label{tabell2}
\begin{tabular*}{\tablewidth}{@{\extracolsep{\fill}}ld{1.3}d{1.3}d{1.3}d{1.3}@{}}
\hline
\textbf{MISE} & \multicolumn{1}{c}{$\bolds{n=100}$} & \multicolumn{1}{c}{$\bolds{n=500}$}
& \multicolumn{1}{c}{$\bolds{n=1\mbox{,}000}$} & \multicolumn{1}{c@{}}{$\bolds{n=5\mbox{,}000}$} \\
\hline
$I_n$ & 1.80 & 1.99 & 2.02 & 2.07 \\
$\hat{f}_n$ & 0.710 & 0.520& 0.432 & 0.305 \\
$\tilde{f}_n$ & 1.12 & 0.803 & 0.659 & 0.472\\
\hline
\end{tabular*}
\end{table}

Both estimators $\hat{f}_n$ and $\tilde{f}_n$ seem to
have good finite sample properties. As indicated by the theory, $\tilde
{f}_n$ seems to be less efficient than $\hat{f}_n$.

\section{\texorpdfstring{Proofs of Theorems \protect\ref{theo:lineaire-monotone}
and \protect\ref{theo:gaussien-lrd-monotone}}{Proofs of Theorems 5 and 6}}
\label{section:proofs}

Let $J_{n}$ be the integral of the generic preliminary estimator of the
spectral density, that is the integral of $I_n$ or of $\log(I_n)$, let $K$
denote $F$ or the primitive of $\log f$, respectively, and write
%
\begin{equation} \label{eq:PaAPROC}
J_{n}(t) = K(t) + v_{n}(t) .
\end{equation}
Let $d_{n}\downarrow0$ be a deterministic sequence, and define the rescaled
process and rescaled centering
%
\begin{eqnarray} \label{eq:def-rescaled}
\tilde{v}_{n}(s;t_{0}) &=& d_{n}^{-2} \{v_{n}(t_{0} + s d_{n}) -
v_{n}(t_0)\} , \\
g_{n}(s) &=& d_{n}^{-2} \{K(t_{0}+sd_{n})-K(t_0) - K'(t_0)
d_n s \} .
\end{eqnarray}
Consider the following conditions:
\begin{enumerate}[(AH1)]
\item[(AH1)]\hypertarget{item:conv-rescaled} There exists a stochastic process
$\tilde{v}(\cdot;t_{0})$ such that
%
\begin{equation}\label{eq:PaAprocconv}
\tilde{v}_{n}(\cdot;t_{0}) \stackrel{\mathcal{L}}{\rightarrow}
\tilde{v}(\cdot;t_{0}) ,
\end{equation}
in $D(-\infty,\infty)$, endowed with the topology generated by the supnorm
metric on compact intervals, as $n\rightarrow\infty$.
\item[(AH2)]\hypertarget{item:cone} For each $\varepsilon,\delta>0$ there is a
finite $\tau$ such that
%
\begin{eqnarray}
\label{eq:PaAprop1}
\limsup_{n\rightarrow\infty} P \biggl( \sup_{|s|\geq\tau} \biggl|
\frac{\tilde{v}_{n}(s;t_0)}{g_{n}(s)} \biggr| > \varepsilon\biggr) & < &
\delta, \\
\label{eq:coneBM}
P \biggl( \sup_{|s|\geq\tau} \biggl| \frac{\tilde{v}(s;t_{0})}{s^{2}} \biggr|
\varepsilon\biggr) & < & \delta.
\end{eqnarray}
\item[(AH3)]\hypertarget{item:parabole} There is a constant $A<0$ such that
for each $c>0$,
%
\begin{equation}\label{eq:PaAbiasconv}
\lim_{n\to\infty} \sup_{|s|\leq c}|g_{n}(s)-As^{2}| = 0 ;
\end{equation}
\item[(AH4)]\hypertarget{item:parabole2} For each $a\in{\mathbb R}$ and
$c,\varepsilon>0$
%
\begin{equation} \label{eq:PaApanik1}
P\bigl( \tilde v(s;t_0)(s) - \tilde v(0;t_0) + As^2 - as \geq\varepsilon
|s| \mbox{ for all } s \in[-c,c]\bigr) = 0 .
\end{equation}
\end{enumerate}
If there exists a sequence $d_n$ such that these four conditions hold, then,
defining the process $y$ by $y(s) = \tilde v(s;t_0)+As^2$, by
Anevski and H{\"o}ssjer [(\citeyear{anevskihossjer2006}), Theorems 1
and 2] as $n\rightarrow\infty$, it holds that
%
\begin{equation} \label{eq:conv-slope}
d_{n}^{-1} \{ T(J_{n})'(t_{0}) - K'(t_{0}) \} \stackrel{\mathcal
{L}}{\rightarrow} T(y)' (0) ,
\end{equation}
where $T(y)'(0)$ denotes the slope at zero of the smallest concave
majorant of
$y$.


\subsection{\texorpdfstring{Proof of Theorem
\protect\ref{theo:lineaire-monotone}}{Proof of Theorem 5}}
\label{sec:proof-theo-linmono}
The proof consists in checking conditions
\hyperlink{item:conv-rescaled}{(AH1)}--\hyperlink{item:parabole2}{(AH4)}
with $J_n = F_n$ and $K = F$.
\begin{enumerate}[-]
\item[-] It is proved in Lemma \ref{lemma:rescaled} below
that (\ref{eq:PaAprocconv}) holds with $d_n = n^{1/3}$ and $\tilde
v(\cdot;t_0)$ the standard two-sided Brownian motion times $\sqrt{\pi^2/6}$.
\item[-] If $f'(t_0)<0$, then (\ref{eq:PaAbiasconv}) holds with $A =
\frac12 f'(t_0)$ and $d_n\downarrow0$ an arbitrary deterministic
sequence.
\item[-] Lemma \ref{lem:truncation} shows that (\ref{eq:PaAprop1}) holds
and the law of iterated logarithm yields that (\ref{eq:coneBM})
holds for the two-sided Brownian motion.
\item[-] Finally, (\ref{eq:PaApanik1}) also holds for the two-sided
Brownian motion.
\end{enumerate}
Thus (\ref{eq:conv-slope}) holds with the process $y$ defined by
\[
y(s) = \tfrac12 f'(t_0) s^2 + \sqrt{2\pi} f(t_0) W(s) .
\]
The scaling property of the Brownian motion yields the representation of
$T(y)'(0)$ in terms of Chernoff's distribution.

%
\begin{lem} \label{lemma:rescaled} Assume the process $\{X_n\}$ is
given by
(\ref{eq:linproc}), that (\ref{eq:spectdens}) holds and that
$\esp[\varepsilon_0^8]<\infty$. If $d_n=n^{-1/3}$, then the sequence of
processes $\tilde v_n(\cdot;t_0)$ defined in (\ref{eq:def-rescaled})
converges weakly in $C([-c,c])$ to $\sqrt{2\pi} f(t_0) W$ where $W$
is a
standard two-sided Brownian motion.
\end{lem}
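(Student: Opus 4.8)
The plan is to reduce the rescaled integrated periodogram to a partial sum of (asymptotically) i.i.d.\ variables and to apply a functional central limit theorem, after discarding several asymptotically negligible remainders. Since $J_n=F_n$ and $K=F$ here, we have $v_n=F_n-F$, so for $s>0$ (the case $s<0$ being symmetric)
\[
\tilde v_n(s;t_0)=n^{2/3}\int_{t_0}^{t_0+sn^{-1/3}}\{I_n(u)-f(u)\}\,\d u .
\]
Its mean is $n^{2/3}\int_{t_0}^{t_0+sn^{-1/3}}\{\esp[I_n(u)]-f(u)\}\,\d u$; because the periodogram bias is of order $n^{-1}$ (up to logarithmic factors) near $t_0$ while the interval has width $sn^{-1/3}$, this integrated local bias is $O(sn^{-2/3})=o(1)$ after magnification by $n^{2/3}$, uniformly on $[-c,c]$, so we may centre at $F$ rather than at $\esp[F_n]$.

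Next I would pass from the periodogram of $\{X_k\}$ to that of the innovations. With $A(\lambda)=\sum_j a_j\mathrm e^{-\mathrm i j\lambda}$ and $|A|^2=2\pi f$, the classical linear-process expansion (see \cite{brockwell:davis:1991}) gives $I_n(u)=2\pi f(u)\,I_n^\epsilon(u)+R_n(u)$, where $I_n^\epsilon$ is the periodogram of $\{\epsilon_k\}$; condition~(\ref{eq:spectdens}) is exactly what controls the remainder, and I would use it (together with \cite{mikosch:norvaisa:1997}) to show that $n^{2/3}\int_{t_0}^{t_0+sn^{-1/3}}R_n(u)\,\d u\to0$ in probability, uniformly on $[-c,c]$. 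Using continuity of $f$ to replace $f(u)$ by $f(t_0)$ on the shrinking interval, the leading term is $f(t_0)\,n^{2/3}\int_{t_0}^{t_0+sn^{-1/3}}\{2\pi I_n^\epsilon(u)-1\}\,\d u$. Approximating this integral by its Riemann sum over the Fourier frequencies $\lambda_j=2\pi j/n$ lying in $[t_0,t_0+sn^{-1/3}]$ (of which there are $\sim sn^{2/3}/(2\pi)$) turns the leading term into $\frac{2\pi f(t_0)}{n^{1/3}}\sum_{t_0<\lambda_j\le t_0+sn^{-1/3}}\{2\pi I_n^\epsilon(\lambda_j)-1\}$, a scaled partial sum of the ordinates $2\pi I_n^\epsilon(\lambda_j)$, which are asymptotically i.i.d.\ standard exponential (mean and variance one). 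A Donsker-type invariance principle then yields convergence to a Brownian motion whose variance at $s$ equals $((2\pi)^2 f^2(t_0)/n^{2/3})\cdot(sn^{2/3}/(2\pi))=2\pi f^2(t_0)s$, that is, to $\sqrt{2\pi}\,f(t_0)\,W(s)$. Note that the fourth-cumulant term appearing in the global covariance of $F_n$ scales like the square of the interval width and is therefore killed by the local magnification; this is why the limit is the universal Brownian motion and the innovation law enters only through the moment condition.

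It then remains to prove tightness in $C([-c,c])$, which I would deduce from the Kolmogorov--Chentsov criterion via a fourth-moment increment bound $\esp[|\tilde v_n(s_2)-\tilde v_n(s_1)|^4]\le C(s_2-s_1)^2$. Since the periodogram is a quadratic functional of the data, its fourth moment involves eighth-order moments of the innovations, which is precisely why $\esp[\epsilon_0^8]<\infty$ is assumed. The hard part will be exactly these uniform estimates at the magnified local scale: ordinary invariance principles for the integrated periodogram operate at the global $\sqrt n$-rate over fixed intervals, whereas here one zooms into an interval of width $n^{-1/3}$ and magnifies by $n^{2/3}$, so the control of the remainder $R_n$, of the Riemann-sum approximation, and of the tightness bound all require careful cumulant bookkeeping --- in particular a quantitative version of the only-approximate independence of the periodogram ordinates at distinct Fourier frequencies --- with errors that remain negligible after this rescaling.
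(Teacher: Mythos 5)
Your first half coincides with the paper's own proof: the decomposition $I_n(u)=f(u)\,I_n^{(\epsilon)}(u)+r_n(u)$, with $I_n^{(\epsilon)}$ the innovation periodogram, and the use of the uniform bound (5.11) of \cite{mikosch:norvaisa:1997} to kill the rescaled remainder uniformly over $s\in[-c,c]$, is exactly what the paper does. Note that once you have this decomposition, your opening bias discussion is superfluous: $\esp[I_n^{(\epsilon)}(u)]=1$ exactly (the innovations are i.i.d.\ with unit variance), so all bias is absorbed into $r_n$ and handled by the remainder bound; moreover your claimed bias rate $O(n^{-1})$ up to logarithms is not justified under condition~(\ref{eq:spectdens}), which gives only limited smoothness of $f$ --- fortunately only $o(n^{-1/3})$ is needed, so this is cosmetic rather than fatal.

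The genuine gap is the central limit step. You discretize onto the $\sim sn^{2/3}$ Fourier frequencies in the shrinking window and then invoke ``a Donsker-type invariance principle'' for the partial sums of the ordinates $2\pi I_n^{\epsilon}(\lambda_j)$, on the grounds that these are ``asymptotically i.i.d.\ standard exponential.'' For non-Gaussian innovations (the lemma assumes only $\esp[\epsilon_0^8]<\infty$) this is precisely the statement that needs proof, and no off-the-shelf theorem provides it: any \emph{fixed} finite collection of ordinates is asymptotically independent exponential, but here the number of ordinates grows like $n^{2/3}$, every pair of ordinates is correlated at order $n^{-1}$ through the fourth cumulant of $\epsilon_0$, and all ordinates are quadratic forms in the same $n$ innovations. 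A functional CLT for such a triangular array of dependent quadratic forms is the entire content of the lemma, so citing it as an ``invariance principle'' is circular. The paper resolves this by \emph{not} discretizing: it writes the rescaled integral $\tilde v_n^{(\epsilon)}(s)$ as a quadratic form, splits off the diagonal term $d_n^{-2}\{\hat\gamma_n(0)-1\}F(d_ns)$ (negligible, of order $O_P(\sqrt{d_n})$), and recognizes the off-diagonal part $S_n(s)=\sum_{k=2}^n C_k(s)\epsilon_k$ as a martingale in $k$; the martingale CLT of \cite{hall:heyde:1980} then gives the finite-dimensional laws, with Parseval--Bessel yielding the variance $2\pi f^2(t_0)s$ and Rosenthal's inequality yielding both the negligibility condition and the fourth-moment increment bound $\esp[|S_n(s)-S_n(s')|^4]\leq C|s-s'|^2$ used for tightness. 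Your Riemann-sum approximation step (the difference between $n^{2/3}\int\{2\pi I_n^{\epsilon}(u)-1\}\,\d u$ and its discretization over Fourier frequencies) would itself require second-moment estimates of the same quadratic-form type, so it adds work rather than removing any. In short: your plan is a correct reduction up to the point where the real difficulty begins, and the result you invoke as the engine of the proof is exactly what remains to be proven; the martingale decomposition of the quadratic form is the missing idea.
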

\begin{pf}
For clarity, we omit $t_0$ in the notation. Write
\[
\tilde v_n(s) = \tilde v_n^\varepsilon(s) + R_n(s)
\]
with
%
\begin{eqnarray}\quad
\label{eq:defvnepsilon}
\tilde{v}_n^{(\varepsilon)}(s) &=& d_n^{-2} \int_{t_0}^{t_0+d_ns} f(u)
\bigl\{I_n^{(\varepsilon)}(u)-1\bigr\} \,\dd u ,\nonumber\\[-8pt]\\[-8pt]
I_n^{(\varepsilon)}(u) &=& \frac{1}{n} \Biggl| \sum_{k=1}^n \varepsilon_k
e^{i ku} \Biggr|^2 , \nonumber\\
\label{eq:remainder}
R_n &=& d_n^{-2} \int_{t_0}^{t_0+d_ns} r_n(u) \,\dd u ,\qquad
r_n(u) = I_n(u)-f(u)I_n^{(\varepsilon)}(u) .
\end{eqnarray}
Note that $(2\pi)^{-1} I_n^\varepsilon$ is the periodogram for the white
noise sequence $\{\varepsilon_i\}$. We first treat the remainder term
$R_n$. Denote $\mathcal{G}=\{g\dvtx\int_{-\pi}^{\pi} g^2(u) f^2(u)\,
du<\infty\}$.
Equation (5.11) (with a typo in the normalization) in
\citet{mikoschnorvaisa1997} states that if (\ref
{eq:spectdens}) and
$\mathbb{E}[\varepsilon_0^8]<\infty$ hold, then
%
\begin{equation} \label{eq:miknor}
\sqrt{n}\sup_{g\in\mathcal{G}}\int_{-\pi}^{\pi} g(x)r_n(x) \,\dd x =
o_P(1) .
\end{equation}
Define the set $\tilde{\mathcal{G}} = \{k_{n}(\cdot,s)f\dvtx n \in
{\mathbf
N}, s \in[-c,c]\}$. Since $f$ is bounded, we have that $\int
k_{n}^2(u,s) f^2(u) \,\dd u < \infty$, so $\tilde\mathcal{G} \subset
\mathcal{G}$ and we can apply (\ref{eq:miknor}) on $\tilde\mathcal
{G}$, which shows that $R_n$ converges uniformly (over $s\in[-c,c]$)
to zero.
We next show that
%
\begin{equation} \label{eq:rescalediid}
\tilde{v}_n^{(\varepsilon)}(s) \stackrel{\mathcal{L}}{\rightarrow}
\sqrt{2 \pi}
f(t_0) W(s) ,
\end{equation}
as $n\rightarrow\infty$, on $C({\mathbb R}) $, where $W$ is a standard
two-sided Brownian motion. Since $\{\varepsilon_k\}$ is a white noise
sequence, we set $t_0=0$ without loss of generality. Straightforward
algebra yields 
%
\begin{equation}\label{eq:44}
\tilde{v}_n^{(\varepsilon)}(s) = d_n^{-2} \{\hat{\gamma}_n(0)-1\}
F(d_ns) + S_n(s)
\end{equation}
with
\begin{eqnarray*}
\hat{\gamma}_n(0) &=& n^{-1} \sum_{j=1}^{n} \varepsilon_j^2 ,\qquad
S_n(s) = \sum_{k=2}^n C_k(s) \varepsilon_k , \\
C_k(s) &=& d_n^{3/2} \sum_{j=1}^{k-1} \alpha_j(s) \varepsilon_{k-j}
,\qquad
\alpha_j(s) = d_n^{-1/2} \int_{-d_n s}^{d_n s} f(u) {
e}^{ij u } \,\dd u .
\end{eqnarray*}
%
Since $\{\varepsilon_j\}$ is a white noise sequence with finite fourth
moment, it is easily checked that
%
\begin{eqnarray}\label{eq:varhatgamma0}
n\operatorname{var}(\hat{\gamma}_n(0)) &=& \operatorname{var}(\varepsilon_0^2) ,
\nonumber\\[-8pt]\\[-8pt]
\sup_{s\in[-c,c]} d_n^{-2} \int_0^{d_ns} f(u) \,\dd u\,
|\hat{\gamma}_n(0)-1| &=& O_P(d_n^{-1}n^{-1/2}) = O_P\bigl(\sqrt{d_n}\bigr)
\nonumber
\end{eqnarray}
so that the first term in (\ref{eq:44}) is negligible. It remains to prove
that the sequence of processes $S_n$ converges weakly to a standard Brownian
motion. We prove the convergence of finite dimension distribution by
application of the Martingale central limit theorem [cf., e.g.,
\citet{hallheyde1980}, Corollary 3.1]. It is sufficient to
check the following
conditions:
%
\begin{eqnarray}
\label{eq:variance}
\lim_{n\to\infty} \sum_{k=2}^n \esp[C_k^2(s)] &=& 2 \pi f^2(0) s ,
\\
\label{eq:negligibility}
\lim_{n\to\infty} \sum_{k=2}^n \esp[C_k^4(s)] &=& 0 .
\end{eqnarray}
By the Parseval--Bessel identity, we have
\[
\sum_{j=-\infty}^\infty\alpha_j^2(s) = 2 \pi d_n^{-1} \int_{-d_n
s}^{d_n s} f^2(u) \,\dd u \sim4 \pi f^2(0) s .
\]
Since $\alpha_0(s) \sim2f(0) \sqrt{d_n}$, this implies that
\[
\sum_{k=2}^n \esp[C_k^2(s)] 
= \sum_{j=1}^{n-1} (1-j/n) \alpha_j^2(s) \sim\sum_{j=1}^\infty
\alpha_j^2(s)\sim2 \pi f^2(0) s .
\]
This proves condition (\ref{eq:variance}). For the asymptotic
negligibility condition (\ref{eq:negligibility}), we use Rosenthal's
inequality [cf. \citet{hallheyde1980}, Theorem 2.12],
\[
E[C_k^4] \leq cst n^{-2} \sum_{j=1}^{k-1} \alpha_j^4(s) +
cst n^{-2} \Biggl( \sum_{j=1}^{k-1} \alpha_j^2(s) \Biggr)^2 = O(n^{-2}),
\]
implying $\sum_{k=1}^n \esp[C_k^4(s)] = O(n^{-1})$, which
proves (\ref{eq:negligibility}). To prove tightness, we compute the fourth
moment of the increments of $S_n$. Write
\[
S_n(s) - S_n(s') = n^{-1/2} \sum_{k=1}^n \sum_{j=1}^{k-1}
\alpha_j(s,s') \varepsilon_{k-j} \varepsilon_k ,
\]
with
\[
\alpha_j(s,s') = d_n^{-1/2} \int_{d_ns'}^{d_ns} f(u) {e}^{i j
u} \,\dd u + d_n^{-1/2} \int_{-d_ns}^{-d_ns'} f(u) {e}^{
i j
u} \,\dd u .
\]
By Parseval's inequality, it holds that
\[
\sum_{j=1}^n \alpha_j^2(s,s') \leq C |s-s'| .
\]
Applying again Rosenthal's inequality, we obtain that
$\esp[|S_n(s)-S_n(s')|^4]$ is bounded by a constant times
\[
n^{-1} \sum_{j=1}^n \alpha_j^4(s,s') + \Biggl(\sum_{j=1}^n
\alpha_j^2(s,s') \Biggr)^2 \leq C |s-s'|^2 .
\]
Applying [Billingsley (\citeyear{billingsley1968}), Theorem 15.6]
concludes the proof of
tightness.
\end{pf}
\begin{lem} \label{lem:truncation}
For any $\delta>0$ and any $\kappa>0$, there exists $\tau$ such that
%
\begin{equation} \label{eq:cone}
\limsup_{n\to\infty} \pr\biggl( \sup_{|s| \geq\tau} \frac{|\tilde
v_n(s)|}{|s|} > \kappa\biggr) \leq\delta.
\end{equation}
\end{lem}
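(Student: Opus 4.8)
The plan is to reuse the decomposition introduced in the proof of Lemma~\ref{lemma:rescaled}, but now on the whole range of $s$, which is $|s|\le\pi/d_n$. Writing $t=t_0+sd_n$, note that $\tilde v_n(s)/s = d_n^{-1}\{v_n(t)-v_n(t_0)\}/(t-t_0)$, so the statement is really a bound on the normalized increments of the empirical spectral distribution over dyadically growing scales, and dividing by $|s|$ is exactly what will tame the large-$s$ behaviour. Accordingly I would split $\tilde v_n(s)=A_n(s)+S_n(s)+R_n(s)$, where (following~\eqref{eq:44} and~\eqref{eq:remainder}) $A_n(s)=d_n^{-2}\{\hat\gamma_n(0)-1\}\{F(t_0+sd_n)-F(t_0)\}$ is the diagonal ($\hat\gamma_n(0)$) contribution, $S_n$ is the off-diagonal martingale part, and $R_n$ is the remainder. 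By a union bound it suffices to establish~\eqref{eq:cone} for each piece separately, with $\kappa$ replaced by $\kappa/3$.

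The diagonal term is immediate: since $f$ is bounded, $|F(t_0+sd_n)-F(t_0)|\le\|f\|_\infty d_n|s|$, so $|A_n(s)|/|s|\le d_n^{-1}\|f\|_\infty|\hat\gamma_n(0)-1|$ uniformly in $s$. As $n\,\mathrm{var}(\hat\gamma_n(0))=\mathrm{var}(\epsilon_0^2)$ by~\eqref{eq:varhatgamma0} and $d_n=n^{-1/3}$, the right-hand side is $O_P(n^{-1/6})$, whence $\sup_{|s|\ge\tau}|A_n(s)|/|s|\to0$ in probability for every fixed $\tau$.

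The martingale term $S_n$ is the heart of the argument and I would treat it by dyadic peeling. From Lemma~\ref{lemma:rescaled}, the Parseval bound $\sum_j\alpha_j^2(s,s')\le C|s-s'|$ (uniform in $s,s'$ because $f$ is bounded) combined with Rosenthal's inequality gives the increment estimate $\esp[|S_n(s)-S_n(s')|^4]\le C|s-s'|^2$, with $C$ independent of $n$ and of the location and scale of the increment. Since $S_n(0)=0$, a standard moment maximal inequality (as in \cite{billingsley:1968}) then yields $\esp[\sup_{0\le s\le b}|S_n(s)|^4]\le C'b^2$ with $C'$ independent of $n$ and $b$. Partitioning $\{s\ge\tau\}$ into the blocks $[2^j\tau,2^{j+1}\tau]$, $j\ge0$, the event $\sup_{s\in[2^j\tau,2^{j+1}\tau]}|S_n(s)|/s>\kappa$ forces $\sup_{s\le2^{j+1}\tau}|S_n(s)|>\kappa2^j\tau$, so Markov's inequality gives $\pr\le C'(2^{j+1}\tau)^2/(\kappa2^j\tau)^4=4C'\kappa^{-4}\tau^{-2}2^{-2j}$. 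Summing the geometric series over $j$ (and adding the mirror contribution from $s\le-\tau$) bounds the probability by $C''\kappa^{-4}\tau^{-2}$, uniformly in $n$, which is $<\delta$ once $\tau$ is large.

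Finally, for the remainder I would invoke the uniform control~\eqref{eq:miknor} of \cite{mikosch:norvaisa:1997}. Applied to the indicator $g=\mathbf 1_{[t_0,t]}$, whose $\mathbb L^2(f^2\,\d u)$ norm is at most $\|f\|_\infty|t-t_0|^{1/2}$, it yields $|\int_{t_0}^t r_n(u)\,\d u|\le C|t-t_0|^{1/2}n^{-1/2}o_P(1)$ uniformly in $t$. Hence $|R_n(s)|/|s|=d_n^{-1}|\int_{t_0}^t r_n|/|t-t_0|\le Cd_n^{-1}n^{-1/2}|t-t_0|^{-1/2}o_P(1)$, and on $\{|s|\ge\tau\}$, i.e. $|t-t_0|\ge\tau d_n$, this is at most $C\tau^{-1/2}d_n^{-3/2}n^{-1/2}o_P(1)=C\tau^{-1/2}o_P(1)$, using $d_n^{-3/2}n^{-1/2}=1$. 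Combining the three bounds gives~\eqref{eq:cone}. The step I expect to be the real obstacle is making all constants uniform: the increment bound for $S_n$ must hold with a single constant across the $O(\log n)$ dyadic scales up to $b\sim d_n^{-1}$ (so that one maximal inequality feeds a geometrically convergent peeling sum and a single $\tau$ survives the $\limsup_n$), and the remainder step relies crucially on extracting the $|t-t_0|^{1/2}$ gain from the $\mathbb L^2(f^2)$ norm of the indicator, since it is exactly this square-root factor that cancels the $d_n^{-3/2}$ blow-up after rescaling.
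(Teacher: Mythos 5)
Your decomposition of $\tilde v_n$ into the diagonal ($\hat\gamma_n(0)$) term, the martingale part $S_n$ and the remainder $R_n$, the Chebyshev treatment of the diagonal term, and the peeling argument for $S_n$ are essentially the paper's own proof. The paper peels with blocks $[s_{j-1},s_j]$, $s_j=(s_0+j)^\rho$, $\rho>1$, bounding the endpoint values via $\mathrm{var}(S_n(s))=O(s)$ and the in-block oscillation via the fourth-moment increment bound from the proof of Lemma~\ref{lemma:rescaled} together with \citet[Theorem~15.6]{billingsley:1968}; your dyadic blocks accomplish the same thing, and the uniformity in $n$ and in location of the constant in $\esp[|S_n(s)-S_n(s')|^4]\le C|s-s'|^2$, which you flag as the main worry, is indeed automatic, since the Parseval bound $\sum_j\alpha_j^2(s,s')\le C|s-s'|$ uses only the boundedness of $f$. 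One small repair: \citet[Theorem~15.6]{billingsley:1968} yields the tail bound $\pr(\sup_{0\le s\le b}|S_n(s)|>\lambda)\le Cb^2\lambda^{-4}$, not the moment bound $\esp[\sup_{0\le s\le b}|S_n(s)|^4]\le C'b^2$; the latter does not follow by integrating this tail (the integral $\int \lambda^3\cdot\lambda^{-4}\,\d\lambda$ diverges) and would require a M\'oricz-type maximal moment inequality. Since you apply Markov's inequality immediately afterwards, you should simply use the tail bound directly; your geometric series is unchanged.

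The genuine gap is in the remainder term. You invoke~(\ref{eq:miknor}) for the indicators $\mathbf{1}_{[t_0,t]}$ and claim a bound proportional to their $\mathbb L^2(f^2\,\d u)$ norm, uniformly in $t$, i.e. $\sup_t|\int_{t_0}^t r_n(u)\,\d u|\,/\,|t-t_0|^{1/2}=o_P(n^{-1/2})$. No fixed-class uniform convergence theorem delivers this: the normalized indicators $|t-t_0|^{-1/2}\mathbf{1}_{[t_0,t]}$ are not a totally bounded subset of $\mathbb L^2$ (for $t=t_0+2^{-k}$, $k\ge1$, they are pairwise at $\mathbb L^2$-distance at least $\sqrt{2-\sqrt 2}$), and after restricting to $|t-t_0|\ge\tau d_n$ the class depends on $n$, with total variation growing like $n^{1/6}$, so it is outside the scope of the cited result. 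Moreover, for empirical-process-type quantities such square-root-normalized suprema genuinely fail to be $O_P(n^{-1/2})$ (local law of the iterated logarithm), so the step cannot be justified by any uniform CLT; it can only be true here because $r_n$ is a genuinely small remainder. That is also how to fix it: by Cauchy--Schwarz, $\sup_t|\int_{t_0}^t r_n|\,/\,|t-t_0|^{1/2}\le\|r_n\|_{\mathbb L^2(0,\pi)}$, and $\esp[r_n^2(u)]\le Cn^{-1}$ by \citet[Theorem~10.3.1]{brockwell:davis:1991}, so this supremum is $O_P(n^{-1/2})$ --- tight in $n$, though not $o_P(n^{-1/2})$. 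Plugging this into your computation gives $\sup_{|s|\ge\tau}|R_n(s)|/|s|\le \tau^{-1/2}\,n^{1/2}\|r_n\|_{\mathbb L^2(0,\pi)}$ (your scaling $d_n^{-3/2}n^{-1/2}=1$ survives intact), and since $n^{1/2}\|r_n\|_{\mathbb L^2(0,\pi)}$ is a tight sequence, choosing $\tau$ large makes this contribution to~(\ref{eq:cone}) small. The paper proceeds differently, controlling second moments of increments, $\esp[|R_n(s)-R_n(s')|^2]\le Cd_n(s-s')^2$, and repeating the peeling argument; both routes work, but your appeal to~(\ref{eq:miknor}) as the justification does not.
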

\begin{pf}
Without loss of generality, we can assume that $f(t_0)=1$. Recall
that $\tilde v_n = \tilde v_n^{(\varepsilon)} + R_n$ and
$\tilde{v}_n^{(\varepsilon)}(s) = F(d_ns) \zeta_n + S_n(s)$, where
$\tilde v_n^{(\varepsilon)}$ and $R_n$ are defined
in (\ref{eq:defvnepsilon}) and (\ref{eq:remainder}), $\zeta_n =
d_n^{-2} (\hat\gamma_n(0)-1)$ and $S_n$ is defined
in (\ref{eq:44}). Then
\begin{eqnarray*}
\pr\biggl( \sup_{s \geq\tau} \frac{|\tilde v_n(s)|}{s} > \kappa
\biggr) & \leq & \pr\Bigl( \sup_{s\geq\tau} |\zeta_n| F(d_ns)/s> \kappa/3 \Bigr)
+ \pr\biggl( \sup_{s\geq\tau} \frac{|S_n(s)|}{s} > \kappa/3 \biggr)\\
&&{}  +
\pr\biggl( \sup_{s\geq\tau} \frac{|R_n(s)|}{s} > \kappa/3 \biggr) .
\end{eqnarray*}
The spectral\vspace*{1pt} density is bounded, so $F(d_ns)/s \leq Cd_n$ for all $s$.
Since $\operatorname{var}(\zeta_n) = O(d_n^{-1})$, by (\ref
{eq:varhatgamma0}) and
the Bienayme--Chebyshev inequality, we get
\[
\pr\Bigl( \sup_{s\geq\tau} |\zeta_n| F(d_ns)/s> \kappa\Bigr)
\leq O(d_n^{-1}d_n^{2}) = O(d_n) .
\]
Let $\{s_j, j\geq0\}$ be an increasing sequence. Then we have the bound
\begin{eqnarray*}
\pr\biggl( \sup_{s \geq s_0} \frac{|S_n(s)|}{s} > \kappa
\biggr) & \leq & \sum_{j=0}^\infty\pr\bigl(|S_n(s_j)| > \kappa s_j \bigr) \\
&&{} + \sum_{j=1}^\infty\pr\Bigl( {\sup_{s_{j-1} \leq s \leq s_j}}
|S_n(s) -S_n(s_{j-1})| > \kappa s_{j-1} \Bigr) .
\end{eqnarray*}
From (\ref{eq:variance}), we know that $\operatorname{var}(S_n(s)) = O(s)$. Thus
\[
\sum_{j=0}^\infty\pr\bigl(|S_n(s_j)| >\kappa s_j\bigr) \leq cst
\kappa^{-2} \sum_{j=0} s_j^{-1} .
\]
Thus if the series $s_j^{-1}$ is summable, this sum can be made
arbitrarily small by choosing $s_0$ large enough. It was shown in the
proof of Lemma \ref{lemma:rescaled} that
\[
\esp[|S_n(s)-S_n(s')|^4] \leq C |s-s'|^2 .
\]
By \citet{billingsley1968}, Theorem 15.6 [or more specifically
\citet{ledouxtalagrand1991}, Theorem 11.1], this implies that
\[
\pr\Bigl( {\sup_{s_{j-1} \leq s \leq s_j}} |S_n(s) - S_n(s_{j-1})| >
\kappa s_{j-1} \Bigr) \leq\frac{C(s_j-s_{j-1})^2}{\kappa^2
s^2_{j-1}} .
\]
Thus choosing $s_j = (s_0+j)^{\rho}$ for some $\rho>1$ implies that
the series is convergent and
\[
\pr\biggl( \sup_{s \geq s_0} \frac{|S_n(s)|}{s} > \kappa\biggr) =
O(s_0^{-1}) ,
\]
which is arbitrarily small for large $s_0$.

To deal with the remainder term $R_n$, we prove that $\pr({\sup_{s\geq
s_0}}|R_n(s)|/s >s_0) = o_P(1)$ by the same method as that used for
$S_n$. Thus we only need to obtain a suitable bound for the
increments of $R_n$. By definition of $R_n$, we have, for $s<s'$,
\[
R_n(s')- R_n(s) = d_n^{-2} \int_{t_0+d_ns}^{t_0+d_ns'} f(u) r_n(u)
\,\dd u .
\]
Since $f$ is bounded, by H\"older's inequality, we get
\[
\esp[|R_n(s') - R_n(s)|^2] \leq\|f\|_\infty n (s'-s)
\int_{t_0+d_ns}^{t_0+d_ns'} \esp[r_n^2(u)] \,\dd u .
\]
Under (\ref{eq:spectdens}), it is known [see, e.g.,
\citet{brockwelldavis1991}, Theorem 10.3.1] that
\[
\esp[r_n^2(u)] \leq C n^{-1} .
\]
Hence,
\[
\esp[|R_n(s') - R_n(s)|^2] \leq C d_nn (s'-s)^2 .
\]
The rest of the proof is similar to the proof for $S_n$. This
concludes the proof of (\ref{eq:cone}).
\end{pf}

\subsection{\texorpdfstring{Sketch of proof of Theorem \protect\ref
{theo:gaussien-lrd-monotone}}{Sketch of proof of Theorem 6}}
The proof consists in checking
conditions \hyperlink{item:conv-rescaled}{(AH1)}--\hyperlink{item:parabole2}{(AH4)} with
$J_n$ and $K_n$ now defined by $J_n(t) =
\int_0^t \{\log I_n(s) + \gamma\} \,\dd s $ and $K(t) = \int_0^t
\log
f(2\pi[ns/2\pi]/n) \,\dd s $.
Let $\lambda_k = 2 k \pi/n$ denote the so-called Fourier frequencies.
For $t\in[0,\pi]$, denote $k_n(t) = [nt/2\pi]$. Denote
\[
\xi_k = \log\{I_n(\lambda_k)/f(\lambda_k)\} + \gamma,
\]
where $\gamma$ is Euler's constant. Then
\[
v_n(t) = J_n(t) -K(t) = \frac{2\pi}n \sum_{j=1}^{k_n(t)} \xi_j +
\bigl(t-\lambda_{k_n(t)}\bigr) \xi_{k_n(t)} .
\]
The log-periodogram ordinates $\xi_j$ are not independent, but sums of
log-peri\-odogram ordinates, such as the one above, behave
asymptotically as sums of independent random variables with zero mean
and variance $\pi^2/6$ [cf. \citet{soulier2001}], and bounded moments
of all order. Thus, for $t_0\in(0,\pi)$, the process $\tilde
v_n(s;t_0) = d_n^{-2}\{v_n(t_0+d_ns) - v_n(t_0)\}$ with $d_n =
n^{-1/3}$ converges weakly in $D(-\infty,\infty)$ to the
two-sided Brownian motion with variance $2\pi^4/3$. It can be shown
by using the moment bounds of \citet{soulier2001}
that (\ref{eq:PaAprop1}) holds. Finally, if $f$ is differentiable at
$t_0$, it is easily seen that $d_n^{-2}(K(t_0+d_ns) - K(t_0) - d_n s
J'_b(t_0)\}$ converges to $\frac12 A s^2$ with $A = f'(t_0)/f(t_0)$.

\section*{Acknowledgments}
We kindly thank Sir David Cox for suggesting the problem of estimating
a spectral density under monotonicity assumptions. We would also like
to thank the Associate Editor and referees for their valuable
comments.


%
\printaddresses

\end{document}